\newtheorem{proposition}{Proposition}[section]
\newtheorem{theorem}[proposition]{Theorem}
\newtheorem{corollary}[proposition]{Corollary}
\newtheorem{lemma}[proposition]{Lemma}
\newtheorem{definition}[proposition]{Definition}
\theoremstyle{definition}
\newtheorem{remark}[proposition]{Remark}
\renewcommand{\H}{\mathbb{H}}
\newcommand{\E}{\mathbb{E}}
\newcommand{\N}{\mathbb{N}}
\newcommand{\R}{\mathbb{R}}
\renewcommand{\P}{\mathbb{P}}
\newcommand{\dint}{{\rm d}}
\newcommand{\one}{\mathbbm{1}}
\DeclareMathOperator{\Isom}{Isom}
\DeclareMathOperator{\capop}{cap}
\DeclareMathOperator{\vol}{vol}
\title{Critical Poisson hyperplane percolation in hyperbolic space has no unbounded cells}
\author{Tillmann B{\"u}hler\thanks{Institute of Stochastics, Karlsruhe Institute of Technology   \href{mailto:tillmann.buehler@kit.ed}{\texttt{tillmann.buehler@kit.edu}}}, Anna Gusakova\thanks{Institute for Mathematical Stochastics, University of M\"unster \href{mailto:gusakova@uni-muenster.de}{\texttt{gusakova@uni-muenster.de}}}, and Konstantin Recke\thanks{Mathematical Institute, University of Oxford, UK  \href{mailto:konstantin.recke@maths.ox.ac.uk}{\texttt{konstantin.recke@maths.ox.ac.uk}}}}
\date{}
\begin{document}

\maketitle
\begin{abstract}
    We show that tessellations of hyperbolic space by isometry-invariant Poisson processes of $(d-1)$-dimensional hyperplanes do not have an unbounded cell at the critical intensity. This extends a result by Porret-Blanc for the hyperbolic plane ({\em C.~R.~Acad.~Sci.~Paris, Ser.~I} {\bf 344} (2007)) to dimensions $d\ge3$. We also show that for intensities strictly below the critical intensity, infinitely many unbounded cells exist, while for intensities larger than or equal to the critical intensity, no unbounded cell exists. This completely describes the basic phase transition of this continuum percolation model. Our proof uses a method from discrete percolation theory which we adapt to the continuum and combine with specific computations for Poisson hyperplane processes.

    \medskip
    \noindent
    {\bf Keywords}. {Poisson hyperplane tessellation, hyperbolic geometry, hyperbolic space, continuum percolation, critical intensity, zero cell, Poisson point process, stochastic geometry.}\\
    {\bf MSC}. 82B43, 60D05
\end{abstract}

\section{Introduction}

We study tessellations of hyperbolic space $\mathbb H^d$, $d\ge2$, induced by isometry-invariant Poisson processes of hyperplanes viewed as a continuum percolation model. Let us start with a precise description.
Let $A(d,d-1)$ be the set of hyperplanes in $\mathbb H^d$ (that is, \((d-1)\)-dimensional totally geodesic subspaces of \(\H^d\)).
There exists a non-zero isometry-invariant regular Borel measure $\mu_{d-1}$ on $A(d,d-1)$ which is unique up to scaling; see Section \ref{sec:intensity measure} for details and our normalization.
Let $\bar\xi$ be a Poisson process on $A(d,d-1)\times[0,\infty)$ with intensity measure $\mu_{d-1}\times d t$.
The {\bf Poisson hyperplane process with intensity} $\gamma>0$ is the point process $\xi_\gamma$ on $A(d,d-1)$ which consists of all $H$ such that $(H,t)\in \bar\xi$ and $t\le\gamma$.
Note that $\xi_\gamma$ is a Poisson process on $A(d,d-1)$ with intensity measure $\gamma\cdot\mu_{d-1}$.
The union of all hyperplanes in \(\xi_\gamma\) is a random closed subset of \(\H^d\) which we denote
\begin{equation} \label{def-Z}
\mathcal Z_\gamma := \bigcup_{H\in\xi_\gamma} H.
\end{equation}
We are interested in the {\bf vacant set}
\begin{equation} \label{def-V}
\mathcal V_\gamma := \mathbb H^d \setminus \mathcal Z_\gamma,
\end{equation}
which defines an isometry-invariant continuum percolation model subsequently referred to as {\bf Poisson hyperplane percolation with intensity $\gamma>0$}. 

Note that every connected component of $\mathcal V_\gamma$ is the interior of a closed, convex polyhedron with a possibly infinite number of sides (see \cite[Section 6.3]{Ra19} for the definition and more details).
In particular, the collection of (closures of) connected components of $\mathcal V_\gamma$ forms a tessellation of hyperbolic space.
We will refer to connected components shortly as {\bf cells} and denote by $\mathcal C_\gamma(z)$ the $\mathcal V_\gamma$-cell containing $z\in\H^d$ which is a.s.\ well defined. For a fixed origin $o\in\mathbb H^d$, define the {\bf zero cell} to be $\mathcal C_\gamma:=\mathcal C_\gamma(o)$. We then define the {\bf critical intensity}
\begin{equation} \label{def-critical intensity}
\gamma_c := \gamma_c(\mathbb H^d) \coloneqq \sup \big\{ \gamma>0 \colon \mathcal C_\gamma \, \,\text{is unbounded with positive probability} \big\}.
\end{equation}
By monotonicity, $\mathcal C_\gamma$ is unbounded with positive probability for $\gamma<\gamma_c$ and bounded almost surely for $\gamma>\gamma_c$. The first interesting fact about this model is that it exhibits a non-trivial {\em phase transition} in the sense that
\begin{equation}\label{eq:value_critical}
0<\gamma_c<\infty \quad \text{for every dimension} \, \, d\geq2.
\end{equation}
In fact, it is known that 

\begin{equation*}
    \gamma_c = \gamma_c(\mathbb H^d) = \frac{(d-1)^2\sqrt{\pi}\Gamma(\frac{d-1}{2})}{\Gamma(\frac d 2)};
\end{equation*}
see \cite[Theorem 3.3]{GKT} for a proof.
We may also point out that the {\em expected} volume of the zero cell is finite if and only if \(\gamma > \gamma_c\); see \cite[Theorem 7.1]{BHT25}.

Clearly, the most important follow-up question is to determine the behavior {\em at the critical point}, i.e., to decide whether $\mathcal C_{\gamma_c}$ is unbounded with positive probability or not. Using covering properties of the circle by random arcs, Porret-Blanc \cite{P07} proved that in dimension $d=2$, $\mathcal C_{\gamma_c}$ is bounded almost surely. The main result of this paper addresses the question for arbitrary dimension.

\begin{theorem} \label{thm:main}
    For \(d \geq 2\), the zero cell of Poisson hyperplane percolation with critical intensity is almost surely bounded.
\end{theorem}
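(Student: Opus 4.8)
The plan is to recast the geometric question as a covering problem on the boundary sphere $\partial\H^d \cong S^{d-1}$ and then to analyze that covering by a multiscale argument borrowed from percolation on trees. Working in the ball model with $o$ at the centre, recall that a closed unbounded convex set containing $o$ must contain a geodesic ray from $o$; hence $\mathcal C_\gamma$ is unbounded if and only if some geodesic ray emanating from $o$ meets no hyperplane of $\xi_\gamma$. A hyperplane $H$ at distance $r$ from $o$ whose foot of perpendicular points in direction $v\in S^{d-1}$ is crossed by the ray towards $u\in S^{d-1}$ exactly when $u$ lies in the spherical cap $C(v,\theta(r))$ of angular radius $\theta(r)$ given by the hyperbolic angle of parallelism, $\sin\theta(r)=1/\cosh r$. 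Thus $\mathcal C_\gamma$ is unbounded if and only if the random caps $\{C(v,\theta(r))\}$ fail to cover $S^{d-1}$, and writing $U$ for the (closed) uncovered set, the theorem is equivalent to $\P(U\neq\emptyset)=0$ at $\gamma=\gamma_c$.

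First I would carry out the ``specific computation'' transforming the intensity of hyperplanes into the intensity of caps. Since $\mu_{d-1}$ disintegrates, in the $(v,r)$ coordinates, as a uniform direction times a radial density proportional to $\cosh^{d-1}r$, substituting $\sin\theta=1/\cosh r$ turns the cap process into a Poisson process on $S^{d-1}\times(0,\pi/2)$ with intensity proportional to $\gamma\,\sin^{-d}\theta\,\dint\theta\,\dint v$. The crucial feature is that, as $\theta\to0$, this intensity is scale invariant: centres of caps of radius $\sim\theta$ form, in a local chart, a stationary Poisson process whose law is unchanged under the simultaneous rescaling of radius and position. A first-moment computation then shows that $\E[\#\{\text{caps covering a fixed }u\}]=\infty$ for every $\gamma>0$, so each fixed direction is almost surely covered and $U$ is almost surely Lebesgue-null; the entire difficulty is to decide whether $U$ is nonetheless nonempty.

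To settle this I would exploit the scale invariance through a hierarchical decomposition of $S^{d-1}$ into nested cells of geometrically decreasing diameter, the continuum analogue of a rooted tree. Because the cap intensity reproduces itself from scale to scale, the event that a cell still contains a point of $U$ connected to $o$ through the vacant set satisfies an approximate recursion across levels, and the number of ``surviving'' child cells of a given cell is dominated by, and dominates, branching mechanisms whose mean offspring $m(\gamma)$ is an explicit decreasing function of $\gamma$. This is where the method from discrete percolation theory enters: survival of $U$ is equivalent to survival of this branching/tree-percolation process, which occurs with positive probability precisely when $m(\gamma)>1$. Matching the threshold $m(\gamma)=1$ with the stated value of $\gamma_c$ is a second explicit hyperbolic computation, and since a critical, non-degenerate branching process becomes extinct almost surely, we conclude $\P(U\neq\emptyset)=0$, i.e.\ $\mathcal C_{\gamma_c}$ is bounded almost surely.

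The main obstacle is the critical case itself, and it is twofold. First, the reduction to an exactly independent branching process is false: a single large cap can cover many cells simultaneously, sibling cells are positively correlated, and caps occur at every scale rather than one per level, so the tree argument must be implemented as a renormalization that tolerates these dependencies rather than as a literal Galton--Watson comparison. Second, and more fundamentally, at $\gamma=\gamma_c$ the first moment is uninformative (every direction is covered almost surely regardless of $\gamma$), so one cannot decide nonemptiness of $U$ by counting; one must show that the borderline process is exactly critical and invoke the delicate fact that critical processes die out, the continuum counterpart of the well-known subtlety of proving absence of percolation at criticality. Controlling the correlations well enough to pin the renormalized process to criticality at the explicit $\gamma_c$ is the crux of the argument.
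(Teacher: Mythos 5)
Your reduction to a covering problem on $\partial\H^d\cong\mathbb S^{d-1}$ is correct (the zero cell is unbounded iff some geodesic ray from $o$ misses every hyperplane, and the directions blocked by a hyperplane at distance $r$ form a cap of angular radius $\arccos(\tanh r)$, so $\sin\theta=1/\cosh r$), and your computation of the induced cap intensity $\propto\gamma\,\sin^{-d}\theta\,\dint\theta\,\dint v$ and of the divergent first moment are both right. But the argument has a genuine gap exactly where you locate ``the crux'': you never actually decide whether the uncovered set is empty at $\gamma=\gamma_c$. The step ``the renormalized process is exactly critical, and a critical non-degenerate branching process dies out'' is not available. The scale-to-scale process is not a Galton--Watson process (large caps act on all scales at once and siblings are positively correlated, as you note), and any comparison with genuine branching processes via moment bounds will produce sub- and supercritical dominating processes whose thresholds straddle $\gamma_c$ rather than pin it. More fundamentally, for random covering problems the answer at the critical intensity is governed by second-order corrections that a mean-offspring renormalization cannot see: already for $d=2$ the resolution is Shepp's condition $\sum_n n^{-2}e^{\ell_1+\cdots+\ell_n}=\infty$, which is strictly finer than any first-order criterion. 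This is precisely the known state of the art that the present paper's introduction describes: a covering theorem of Hoffmann--J{\o}rgensen determines boundedness of the zero cell for every intensity \emph{except} $\gamma_c$ when $d\ge3$. So your proposal, as written, reproves what was already known and leaves the critical case open.

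The paper takes an entirely different route that bypasses the covering problem. Assuming an unbounded cell exists at $\gamma_c$, it constructs (by an FKG argument producing $2^d$ disjoint escape directions plus an insertion-tolerance surgery that inserts ``walls'' of hyperplanes) $r$-encounter points, i.e.\ a Burton--Keane-type trifurcation structure, builds an invariant forest on these points in which every vertex has degree at least three, and then derives a contradiction from the Mass Transport Principle after sprinkling an independent hyperplane process of intensity $\varepsilon$ (which kills all unbounded cells since $\gamma_c+\varepsilon>\gamma_c$, yet transports asymptotically vanishing mass into a fixed ball as $\varepsilon\to0$). If you want to salvage a covering-theoretic proof you would need a genuinely new criterion for sphere covering at criticality in dimension $d-1\ge2$; the branching heuristic alone will not close the gap.
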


The proof of Theorem~\ref{thm:main} is given in Section~\ref{sec:mainproof}. We apply this result to obtain the following complete description of the percolation phase transition for Poisson hyperplane percolation.  

\begin{corollary}[Description of the phase transition] \label{cor:phase} For $d\ge2$, if $N_\gamma$ denotes the number of unbounded cells in $\mathcal V_\gamma$, we have that almost surely
$$
N_\gamma = \begin{cases} 0, & \text{if} \, \, \gamma \geq \gamma_c, \\ \infty, & \text{if} \, \, \gamma<\gamma_c. \\ \end{cases}
$$
\end{corollary}

In fact, an interesting property of the model in dimensions $d > 2$ is that it exhibits a total of $d-1$ phase transitions, at distinct critical intensities, where the qualitative behavior changes substantially, see Proposition \ref{prop:multiple_phase} for details.

We may also point out that for $d=1$ as well as the analogously defined process on~$\mathbb R^d$, $d\ge2$, the zero cell is bounded a.s.~for every value of the intensity. 

\medskip

{\bf\noindent Typical cell of Poisson hyperplane tessellation.} 
A further motivation for Theorem~\ref{thm:main} comes from stochastic geometry. Indeed, the vacant set $\mathcal{V}_{\gamma}$ is closely related to a classical model of stochastic geometry called Poisson hyperplane tessellation. More precisely, the Poisson hyperplane tessellation $\mathcal{H}_{\gamma}$ is the collection of closures of connected components of $\mathcal{V}_{\gamma}$. For the purposes of this section, we will refer to these also as cells. 

One of the key objects associated to $\mathcal{H}_{\gamma}$ is the typical cell, which may be intuitively seen as a random cell of the isometry invariant tessellation, chosen ``uniformly at random''.
For a rigorous definition of the law of the typical cell, we refer the reader to \cite[Theorem~2.1]{HLS24} and \cite[Section~5.4]{Herold2021}, or the more general treatment of the matter in \cite[Section 8]{Last10}.

We now give some intuition for the notion of the typical cell:
We can associate to each cell $\mathcal{C}\in \mathcal{H}_{\gamma}$ a center point $z(\mathcal{C}):=z(\mathcal{H}_{\gamma},\mathcal{C})\in \H^d$, where the function $z$ is measurable and isometry-covariant, i.e., $z(g\mathcal{H}_{\gamma},g\mathcal{C})=gz(\mathcal{H}_{\gamma},\mathcal{C})$ for any $g\in{\rm Isom}(\mathbb{H}^d)$, where ${\rm Isom}(\mathbb{H}^d)$ denotes the group of isometries of $\mathbb{H}^d$.
These center points form an isometry-invariant point process. For each cell $\mathcal{C}$ we consider its ``shifted''  version $\mathcal{C}_{z(\mathcal{C})}=g_{z(\mathcal{C})}\mathcal{C}$, where $g_{z(\mathcal{C})}$ is chosen uniformly from the set of isometries that map $z(\mathcal{C})$ to~$o$ (see \cite{Last10} for details).
Thus, we can associate to the tessellation $\mathcal{H}_{\gamma}$ a marked point process 
\[
\eta_z:=\sum_{\mathcal{C}\in \mathcal{H}_{\gamma}}\delta_{(z(\mathcal{C}),\mathcal{C}_{z(\mathcal{C})})}
\]
where the points $z(\mathcal{C})$ represent the positions of cells and the marks $\mathcal{C}_{z(\mathcal{C})}$ represent the shape of cells.
Finally, let $\eta^o_z$ be the Palm version of $\eta_z$, which may be seen as a version of $\eta_z$ conditioned on the fact that there exists a point $(o,\mathcal{C}_o)$ in the process.
The point $(o,\mathcal{C}_o) \in \eta_z^o$ is called the typical point of $\eta_z$ and the corresponding mark $\mathcal{C}_o$ the typical cell of Poisson hyperplane tessellation $\mathcal{H}_{\gamma}$. 

The above approach works if all cells of $\mathcal{H}_{\gamma}$ are bounded, since in this case we may choose a center $z(\mathcal{C})$ to be, for example, the center of the smallest ball containing $\mathcal{C}$.
For Poisson hyperplane tessellations with $\gamma > \gamma_c$, results in this direction have been obtained in \cite[Section 5.4]{Herold2021}.
Theorem~\ref{thm:main} thus implies that the typical cell (in the classical sense, described above) of Poisson hyperplane tessellation $\mathcal{H}_{\gamma}$ can be defined at $\gamma_c$.

On the other hand, there is no measurable covariant function $z$ associating a center point to unbounded cells (see Lemma~\ref{lm:ChoosingPoints}).

\begin{remark}[Volume of the typical cell at criticality]
    While the expected volume of the zero cell of $\mathcal{H}_{\gamma}$ is infinite for $\gamma=\gamma_c$ (see~\cite[Theorem~7.1]{BHT25}), the expected volume of the typical cell is finite.
    In fact, the mean volume of the typical cell is always given by $1/D$, where $D$ denotes the cell-intensity, i.e., the mean number of cell center points per unit volume;
    see \cite[Theorem 5.4.3]{Herold2021} (with $f\equiv 1$) for this statement in hyperbolic space and \cite[Equation (8.11)]{Last10} for homogeneous spaces.
\end{remark}
\begin{remark}[Situation in $\H^2$]
    In dimension $2$, the cell intensity (also called \emph{face-intensity} in the planar case) $D = D_\text{F}$ can be explicitly calculated for $\gamma \geq \gamma_c(\H^2)$.
    From now on, let $\gamma \geq \gamma_c(\H^2)$.
    Then by \cite[Theorem 5.5]{BS01}, the following ``Euler formula'' holds:
    \begin{equation*}
        2\pi (D_\text{F} - D_\text{E} + D_\text{V}) = -1,
    \end{equation*}
    where $D_\text{V}$ is the mean number of vertices (in our context, points where two lines intersect) per unit volume and $D_\text{E}$ is $1/2$ times the mean sum of degrees of vertices per unit volume (in our case $D_\text{E} = 2 D_\text{V}$, since all vertices have degree four).
    The face density is defined in \cite{BS01} in a way different from (but equivalent to) our definition above.
    The Euler formula now yields $D_\text{F} = D_\text{V} - \frac{1}{2\pi}$.
    Finally, the quantity $D_\text{V}$ can easily be calculated for arbitrary $\gamma > 0$ using the Crofton formula (see \cite[Theorem 1]{HHT21} or \cite[Equation (1.4)]{BHT23}):
    \[ D_\text{V} = \frac{\omega_1\omega_3}{2\omega_2^2} \gamma^2 = \frac{1}{\pi} \gamma^2, \]
    where $\omega_j \coloneqq 2\pi^{j/2}/\Gamma(j/2) $ denotes the surface area of the Euclidean unit ball in $\R^j$.
    It follows for $\gamma \geq \gamma_c(\H^2) = \pi$ that $D_\text{F} = (2\gamma^2 - 1)/(2\pi)$.
    Consequently, the expected area of the typical cell is $(2\pi)/(2\gamma^2-1)$.
    At criticality, setting $\gamma = \gamma_c$ yields $(2\pi)/(2\pi^2-1) \approx 0.335$.
\end{remark}

\medskip

{\bf\noindent Historical remarks.} Theorem~\ref{thm:main} is new in dimensions $d\ge3$. For the hyperbolic plane (i.e., $d=2$), it was first proved in~\cite{P07}. For completeness, we provide a brief outline of that proof and the reason it requires the dimension to be~$d=2$ following the exposition in~\cite[Section 4]{GKT}. 
Roughly speaking, since the boundary of $\mathbb H^2$ can be identified with the circle $\mathbb{S}^1$ in the Poincaré disc model, the question can be reduced to the problem of determining whether a certain sequence of random arcs covers the circle almost surely. The latter problem admits a remarkable solution due to Shepp~\cite{S72}: Suppose that $\ell_1,\ell_2,\ldots \in(0,1)$ is a non-increasing deterministic sequence and that $L_1,L_2,\ldots$ are random arcs of length $2\pi\ell_1,2\pi\ell_2,\ldots$ placed uniformly and independently at random on the circle. Then the circle is covered infinitely often almost surely if and only if
$$
\sum_{n=1}^\infty \frac{1}{n^2}e^{\ell_1+\ldots+\ell_n} = \infty,
$$
and otherwise there exist uncovered points with positive probability.
We refer the reader to~\cite{GKT} for more information about the history of this question as well as a proof that Shepp's condition holds in the planar critical case. To generalize this approach to $d\ge3$, we have to understand the more general problem of covering metric spaces by random sets, specifically whether the $(d-1)$-dimensional unit sphere is covered by a sequence of randomly placed spherical caps. This problem has been studied in several works including~\cite{H73,FJJS18}; we refer to the latter paper for an overview of the literature. 
Specifically for the question considered in this paper, a special case of a result of Hoffmann-J{\o}rgensen~\cite{H73} can be applied to determine whether the critical zero cell is bounded everywhere {\em except} at the critical intensity; see~\cite[Theorem~4.1 \& Remark~4.3]{GKT} for details.

Finally, for $d=2$, alternative proofs of Theorem~\ref{thm:main} have been given by Benjamini, Jonasson, Schramm and Tykesson~\cite[Proposition 6.1]{BJST09} and Tykesson and Calka~\cite{TC13} using continuum percolation theoretic arguments different from the approach in this paper. We explain and implement our approach in Section~\ref{sec:mainproof}. See~\cite{TC13} also for further results about this model in the hyperbolic plane.

\medskip

{\bf\noindent Organization.} The rest of the paper is organized as follows. Section~\ref{sec:preliminaries} is a preliminary section in which we collect some facts regarding hyperbolic space, the invariant measure on the space of hyperplanes in $\mathbb{H}^d$ and the Mass Transport Principle. Section~\ref{sec:mainproof} is devoted to the proof of our main results Theorem \ref{thm:main} and Corollary \ref{cor:phase}. In Section~\ref{sec:MultiplePhases} we study geometric properties of unbounded cells and establish the aforementioned existence of further critical intensities at which the system undergoes a phase transition.

\medskip

{\bf\noindent Acknowledgement.} We thank Chiranjib Mukherjee vor valuable discussions. For the purpose of open access, the authors have applied a CC BY public copyright licence to any author accepted manuscript arising from this submission. AG and TB were supported by the DFG priority program SPP 2265 \textit{Random Geometric Systems}.
AG was also supported by Germany's Excellence Strategy  EXC 2044 -- 390685587, \textit{Mathematics M\"unster: Dynamics - Geometry - Structure}.

\section{Preliminaries} \label{sec:preliminaries}

In this section, we provide background on hyperbolic space, including in particular the isometry-invariant measure on the space of hyperplanes and the Mass Transport Principle.

\subsection{Hyperbolic space}

Throughout this paper, $(\H^d,g_{\H^d})$ denotes the unique $d$-dimensional, simply connected Riemannian manifold (with metric $g_{\H^d}$) of constant sectional curvature $-1$.
We write $d_{\H^d}$ and $\vol_{\H^d}$ (or simply $\vol$) for the distance function and volume measure induced by the metric.
For $1\leq k\leq d-1$, a $k$-plane (or $k$-dimensional plane) is a $k$-dimensional totally geodesic subspace of $\H^d$.
In particular, a hyperplane is a $(d-1)$-dimensional plane.
An open ball in $\mathbb{H}^d$ with center $x$ and radius $r>0$ will be denoted by $B(x,r)$.

There exist a variety of models of hyperbolic space, which facilitate computations by identifying $\H^d$ with a subset of Euclidean space.
In this article we will use the Klein model (also called projective disc model or Beltrami--Klein model), in which $\H^d$ is identified with the Euclidean unit ball
\[
B_{\mathbb{R}^d}(o,1):=\{x\in\mathbb{R}^d\colon \|x\|<1\},
\]
where we write $o=(0,\ldots,0)$ for the origin.
The ideal boundary $\partial \mathbb{H}^d$ is identified with the unit sphere $\mathbb{S}^{d-1}=\partial B_{\mathbb{R}^d}(o,1)$.
The metric $g^\text{Kl}$ in the Klein model is given by
\[ g^\text{Kl}_x(u,v) = \frac{(1-|x|^2)\langle u,v \rangle + \langle x,u \rangle \langle x,v\rangle}{(1-|x|^2)^2},\quad x \in B_{\R^d}(o,1),\quad u,v \in T_xB_{\R^d}(o,1),  \]
where the tangent space $T_x B_{\R^d}(o,1)$ is identified with $\R^d$ and we write $\langle \, \cdot\, , \, \cdot \, \rangle$ for the Euclidean inner product (cf.~\cite[Theorem 6.1.5]{Ra19}).
In particular it follows for the distance function $d_{\H^d}^\text{Kl}$ that
\begin{equation}\label{eq:DistanceKlein}
d_{\mathbb{H}^d}^{\rm Kl}(o,v)={\rm artanh}(\|v\|), \quad v \in B_{\R^d}(o,1).
\end{equation}
A $k$-plane in the Klein model is the nonempty intersection of a Euclidean $k$-plane with $B_{\R^d}(o,1)$ (cf.\cite[Theorem 6.1.4]{Ra19}).

For more details on this model, we refer the reader to \cite[Section 6.1]{Ra19}.


\subsection{Intensity measure for hyperplanes and normalization} \label{sec:intensity measure} 



Recall that $A(d,d-1)$ denotes the space of hyperplanes in $\mathbb{H}^d$ and the intensity measure of the hyperplane process \(\xi_\gamma\) is given by \(\gamma \cdot \mu_{d-1}\), where the measure \(\mu_{d-1}\) on \(A(d,d-1)\) is defined as
\begin{equation}\label{eq:def_intensity_measure}
    \mu_{d-1}(\,\cdot\,) = \int_{G(d,1)} \int_L \cosh^{d-1}(d_{\mathbb{H}^d}(x,o)) \one\{H(L,x) \in \,\cdot\,\} \,\mathcal{H}^1(\dint x) \,\nu_1(\dint L).
\end{equation}
This is the isometry-invariant measure on the space of hyperplanes in $\H^d$, normalized as in \cite{HHT21,BHT23,BHT25}.
Here, \(G(d,1)\) is the space of lines (that is, $1$-planes of $\H^d$) containing some arbitrary fixed origin \(o\), \(\nu_1\) denotes the unique Borel probability measure on \(G(d,1)\) invariant under isometries fixing \(o\), \(\mathcal{H}^1\) denotes the \(1\)-dimensional Hausdorff measure and \(H(L,x)\) is the hyperplane orthogonal to \(L\) at \(x\).
We refer to \cite{HHT21,BHT23,BHT25} for more details.

In the Klein model, it follows from \eqref{eq:DistanceKlein} that the corresponding measure $\mu_{d-1}^{\rm Kl}$ is given by
\begin{equation}\label{eq:KleinHyperplaneModel}
\mu_{d-1}^{\rm Kl}(\,\cdot\,)=2\int_{\mathbb{S}^{d-1}}\int_{0}^1(1-t^2)^{-{d+1\over 2}}\one\{H'(u,t) \cap B_{\R^d}(o,1) \in \,\cdot\,\} \,\dint t \,\sigma_{d-1}(\dint u),
\end{equation}
where $H'(u,t)$ is a Euclidean hyperplane with normal vector $u\in \mathbb{S}^{d-1}$ and at distance $t>0$ in direction $u$ from the origin and $\sigma_{d-1}$ is the spherical Lebesgue measure, normalized so that $\sigma_{d-1}(\mathbb S^{d-1}) = 1$.

\subsection{Hyperbolic Mass Transport Principle}\label{sec:MassTransport}

A (non-negative) Borel measure $\mu$ on $\mathbb H^d \times\mathbb H^d$ is called {\bf diagonally-invariant} if
$$
\mu(g A\times g B)=\mu(A\times B)
$$
for all measurable subsets $A,B$ of $\mathbb H^d$ and $g\in{\rm Isom}(\mathbb H^d)$.

\begin{theorem}[Mass Transport Principle, {cf.~\cite{BS01}}] \label{theorem:MTP} Let $\mu$ be a diagonally-invariant Borel measure on $\mathbb H^d\times\mathbb H^d$. If $\mu(A\times\mathbb H^d)<\infty$ for some open $A\subset\mathbb H^d$, then 
$$
\mu(B\times\mathbb H^d)=\mu(\mathbb H^d\times B)
$$
for every measurable $B\subset\mathbb H^d$. Moreover, there exists $c\ge0$ such that $\mu(B\times\mathbb H^d)=c \cdot \vol(B)$ for every measurable $B\subset\mathbb H^d$.
\end{theorem}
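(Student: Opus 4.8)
The plan is to reduce the statement to two classical structural facts: the uniqueness (up to scaling) of the isometry-invariant measure on $\H^d$, and the unimodularity of $\Isom(\H^d)$. I introduce the two ``marginal'' set functions $\mu_1(B):=\mu(B\times\H^d)$ and $\mu_2(B):=\mu(\H^d\times B)$, which are Borel measures on $\H^d$. Both are $\Isom(\H^d)$-invariant: since $g\H^d=\H^d$ for every $g\in\Isom(\H^d)$, diagonal invariance gives $\mu_1(gB)=\mu(gB\times g\H^d)=\mu(B\times\H^d)=\mu_1(B)$, and likewise for $\mu_2$. The entire theorem then amounts to showing $\mu_1=\mu_2=c\cdot\vol$ for some $c\ge0$.

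First I would exploit the finiteness hypothesis. Since $A$ is open and nonempty it contains a ball $B(x_0,\varepsilon)$, so $\mu_1(B(x_0,\varepsilon))\le\mu_1(A)<\infty$; by invariance every ball of radius $\varepsilon$ has the same finite $\mu_1$-measure, and covering bounded sets by finitely many such balls shows that $\mu_1$ is locally finite. Writing $\H^d=\Isom(\H^d)/O(d)$ with $O(d)$ compact, the isometry-invariant Radon measure on $\H^d$ is unique up to a scalar (and is realized by $\vol$), so $\mu_1=c\cdot\vol$ for some $c\ge0$. In particular $\mu$ is $\sigma$-finite, because $\H^d\times\H^d=\bigcup_n (B_n\times\H^d)$ with $\mu(B_n\times\H^d)=c\,\vol(B_n)<\infty$ for balls $B_n\uparrow\H^d$. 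If $c=0$ then $\mu_1\equiv0$, hence $\mu\equiv0$ and the claim is trivial, so I may assume $c>0$.

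The crux is to show $\mu_2=c\cdot\vol$ as well. Disintegrating $\mu$ with respect to its first marginal $\mu_1=c\,\vol$ yields a Markov kernel $\kappa$ on $\H^d$ with $\mu(\dint x\,\dint y)=c\,\vol(\dint x)\,\kappa(x,\dint y)$, and diagonal invariance makes $\kappa$ equivariant, $\kappa(gx,g\,\cdot\,)=\kappa(x,\,\cdot\,)$. Then $\mu_2(B)=c\int_{\H^d}\kappa(x,B)\,\vol(\dint x)$, and I would evaluate this by transferring to a Haar measure $m$ on $G:=\Isom(\H^d)$ via the relation $\int_{\H^d}f(x)\,\vol(\dint x)=\int_G f(go)\,m(\dint g)$, valid for a suitable normalization of $m$. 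Rewriting $\kappa(go,B)=\kappa(o,g^{-1}B)$ by equivariance and then using the inversion- and right-invariance of $m$ — that is, the unimodularity of $G$ — the inner group integral collapses to $\vol(B)$, giving $\mu_2(B)=c\,\vol(B)=\mu_1(B)$. This establishes both $\mu_1=\mu_2$ and the proportionality to volume.

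I expect the unimodularity step to be the main obstacle, both conceptually and technically: it is precisely the structural input that forces out-mass to equal in-mass (the principle fails for non-unimodular isometry groups), and one must handle the disintegration carefully, using that $\H^d$ is a standard Borel space and that $\mu$ is $\sigma$-finite so that the kernel $\kappa$ and an equivariant version of it exist. An alternative route to $\mu_1=\mu_2$, special to $\H^d$, is to use two-point homogeneity: the diagonal orbits in $\H^d\times\H^d$ are the level sets of $(x,y)\mapsto d_{\H^d}(x,y)$, the invariant measure on each orbit is unique up to scaling, and the coordinate swap commutes with the diagonal action, so disintegrating over the distance shows that $\mu$ is invariant under swapping the two coordinates, which yields $\mu(B\times\H^d)=\mu(\H^d\times B)$ directly.
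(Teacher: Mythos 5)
Your proof is correct and follows essentially the same route as the argument the paper relies on (the paper itself only cites \cite{BS01} and \cite{LP16} for this statement): identify both marginals as isometry-invariant, locally finite measures on $\H^d$, hence multiples of $\vol$ by uniqueness of the invariant measure, and use unimodularity of $\Isom(\H^d)$ to force the two constants to coincide. The technical points you flag (equivariant disintegration, transfer to Haar measure, or alternatively the swap-invariance of the orbit measures under two-point homogeneity) are standard and are handled exactly this way in the cited references.
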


\begin{proof}
    This is stated and proved for the hyperbolic plane in \cite[Theorem 5.2]{BS01} and this proof extends to $\mathbb H^d$, $d\ge3$. See also~\cite[Theorem 8.47]{LP16}.
\end{proof}

Here is a typical application of the Mass Transport Principle (cf.~\cite[Example 8.6]{LP16}).

\begin{lemma} \label{lm:ChoosingPoints} Let $\mathcal S$ be an isometry-invariant random open subset of $\mathbb H^d$. Suppose that, with positive probability, $\mathcal S$ has some connected component with infinite volume. Then there does not exist an isometry-invariant coupling $(\mathcal S,\mathcal Y)$ where $\mathcal Y$ is a point process in~$\mathbb H^d$ with the property that for every connected component $\mathcal C$ of $\mathcal S$, $|\mathcal Y\cap \mathcal C|=1$.
\end{lemma}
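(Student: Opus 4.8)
The plan is to argue by contradiction via the Mass Transport Principle (Theorem~\ref{theorem:MTP}). Suppose such an isometry-invariant coupling $(\mathcal S,\mathcal Y)$ exists. For $x\in\mathcal S$ write $\mathcal C(x)$ for the connected component of $\mathcal S$ containing $x$; by assumption it contains a unique point of $\mathcal Y$, which I denote $y(x)$. The idea is to transport the volume of each $x\in\mathcal S$ to the distinguished point $y(x)$ of its component. Concretely, I would define a measure $\mu$ on $\H^d\times\H^d$ by
\[
\mu(A\times B) := \E\Big[\int_{\mathcal S\cap A}\one\{y(x)\in B\}\,\vol(\dint x)\Big].
\]
The first step is to verify that $\mu$ is diagonally-invariant: for $g\in\Isom(\H^d)$, the substitution $x\mapsto gx$, the invariance of $\vol$, and the covariance $y(gx)=g\,y(x)$ (which holds because $g$ carries components of $\mathcal S$ to components of $g\mathcal S$ and preserves the unique $\mathcal Y$-point) rewrite $\mu(gA\times gB)$ as the same functional evaluated at the configuration $(g^{-1}\mathcal S,g^{-1}\mathcal Y)$; since this configuration has the same law as $(\mathcal S,\mathcal Y)$, we conclude $\mu(gA\times gB)=\mu(A\times B)$.

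Next I would check the hypothesis of Theorem~\ref{theorem:MTP}. Taking $A=B(o,1)$ a ball of finite volume gives $\mu(A\times\H^d)=\E[\vol(\mathcal S\cap A)]\le\vol(A)<\infty$, so the theorem applies and yields $\mu(B\times\H^d)=\mu(\H^d\times B)$ for every measurable $B$. The outgoing mass is then controlled for any bounded $B$,
\[
\mu(B\times\H^d)=\E[\vol(\mathcal S\cap B)]\le\vol(B)<\infty,
\]
since each point emits exactly its own volume. The incoming mass, by contrast, collects at each distinguished point the entire volume of its component: regrouping the integral over $\mathcal S$ according to the value of $y(x)$ gives
\[
\mu(\H^d\times B)=\E\sum_{y\in\mathcal Y\cap\mathcal S\cap B}\vol\big(\mathcal C(y)\big),
\]
where $\mathcal C(y)$ denotes the component containing $y$.

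The contradiction comes from the infinite-volume component. Set $\mathcal Y_\infty:=\{y\in\mathcal Y\cap\mathcal S\colon \vol(\mathcal C(y))=\infty\}$. Since each component — in particular any of infinite volume — carries exactly one point of $\mathcal Y$, the hypothesis forces $\mathcal Y_\infty\neq\emptyset$ with positive probability. As $\mathcal Y_\infty$ is an isometry-invariant point process, its intensity measure is a constant multiple of $\vol$, and this constant is strictly positive (otherwise $\mathcal Y_\infty=\emptyset$ almost surely, covering $\H^d$ by countably many balls). Hence for a ball $B$ of positive, finite volume we have $\P(\mathcal Y_\infty\cap B\neq\emptyset)>0$, and therefore
\[
\mu(\H^d\times B)\ge\E\sum_{y\in\mathcal Y_\infty\cap B}\vol\big(\mathcal C(y)\big)=\infty,
\]
contradicting $\mu(B\times\H^d)=\mu(\H^d\times B)<\infty$. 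I expect the main obstacle to be bookkeeping rather than conceptual: carefully establishing the diagonal-invariance of $\mu$ (measurability of $x\mapsto y(x)$ and the covariance relation under the invariant coupling), and correctly regrouping the transport so that the mass absorbed at the center of an infinite-volume cell is genuinely infinite. The conceptual heart is simply the asymmetry that every point emits only a finite amount of mass, whereas the single chosen point of an unbounded cell would have to absorb an infinite amount.
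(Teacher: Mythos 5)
Your proposal is correct and follows essentially the same route as the paper: the same transport measure (volume of each component sent to its distinguished $\mathcal Y$-point), the same finiteness bound on outgoing mass, and the same contradiction from the infinite incoming mass at a ball that sees a point of an infinite-volume component with positive probability. The only cosmetic difference is that you phrase the measure as an integral over $x\in\mathcal S$ rather than a sum over components, and you invoke invariance of the intensity of $\mathcal Y_\infty$ where the paper simply takes $r$ large; both are fine.
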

\begin{proof} Suppose that a coupling $(\mathcal S,\mathcal Y)$ as above is given. For each connected component  $\mathcal C$ of $\mathcal S$,  let $y(\mathcal C)$ be the unique point of $\mathcal Y$ in $\mathcal C$. Let $\mu$ be the measure on $\mathbb H^d\times\mathbb H^d$ uniquely determined by 
$$
\mu(A\times B):= \mathbb E\bigg[ \sum_{\mathcal C} {\rm vol}(\mathcal C \cap A) \mathbf 1_{y(\mathcal C)\in B} \bigg] \qquad (A,B\subset \mathbb H^d \text{ measurable}),
$$
where the sum runs over all connected components. Then $\mu$ is diagonally-invariant with 
$$
\mu(B(o,r)\times\mathbb H^d) \le {\rm vol}(B(o,r))
$$
for every $r>0$. Theorem~\ref{theorem:MTP} yields that 
$$
\mu(\mathbb H^d\times B(o,r)) = \mathbb E\bigg[ \sum_{\mathcal C} {\rm vol}(\mathcal C) \mathbf 1_{y(\mathcal C)\in B(o,r)} \bigg]<\infty
$$
for every $r>0$. But choosing $r$ sufficiently large, there is a positive probability that $y(\mathcal C)\in B(o,r)$ for some $\mathcal C$ with infinity volume. This is a contradiction.
\end{proof}

Lemma~\ref{lm:ChoosingPoints} shows that the standard approach for defining the typical cell of a Poisson hyperplane tessellation with bounded cells does not work if there are cells of infinite volume. Indeed, this follows immediately from Lemma~\ref{lm:ChoosingPoints} if the center of each cell lies in the cell and otherwise we may project the center to a point in the cell in an isometry-invariant way. Note that it also follows that there does not exist a {\em generalized center function}, i.e., an isometry-covariant function assigning to each non-trivial (i.e., non-empty and non-full) closed convex subsets of $\mathbb H^d$ one of its points.

\section{Encounter points in hyperplane tessellations} \label{sec:mainproof}

In this section, we prove Theorem~\ref{thm:main} following roughly the approach in~\cite{BLPS99}.
This approach relies on the following fundamental concept, which goes back to the influential work of Burton and Keane~\cite{BK89} and is a well-known tool in discrete percolation theory~\cite{LP16}. 

\begin{definition}[Encounter point]
Let $\mathcal S$ be a random open subset of $\mathbb H^d$ and let $\mathcal Y$ be an independent simple point process on $\mathbb H^d$. Let $r>0$. A point $y\in \mathcal Y$ is called an {\bf $r$-encounter point} if the following conditions hold.
\begin{itemize}
\item The set $\mathcal C_{\mathcal S}(y)\setminus B(y,r)$, where $\mathcal C_{\mathcal S}(y)$ for $y\in\mathcal S$ denotes the connected component of $y$ in $\mathcal S$ and $\mathcal C_{\mathcal S}(y):=\varnothing$ if $y\notin\mathcal S$, has at least three unbounded connected components.
\item There are no other points of $\mathcal Y$ in $B(y,2r)$.
\end{itemize}
\end{definition}

The main geometrical ingredient in the proof of Theorem~\ref{thm:main} is the following.

\begin{lemma}[Unbounded hyperplane tessellations admit encounter points] \label{lem:encounter_pos_prob}
    Let $d\ge2$ and let $\mathcal V_\gamma$ denote Poisson hyperplane percolation in $\mathbb H^d$ with intensity $\gamma>0$. Let $\mathcal Y$ be an independent Poisson process on $\mathbb H^d$ with intensity measure ${\rm vol}(\cdot)$. Suppose that there exists an unbounded cell in \(\mathcal{V}_\gamma\) with positive probability. 
    Then, for some $r>0$, there exists an $r$-encounter point with positive probability.
\end{lemma}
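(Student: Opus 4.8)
The plan is to exhibit, with positive probability, a point $y\in\mathcal Y$ whose cell branches into three unbounded arms. I would combine a soft argument (invariance together with positive association) that produces three separate ``openings to infinity'', with a local surgery on the hyperplane process that glues them into a single tripod-shaped cell. Throughout I work in the Klein model, where a cell is the intersection of a Euclidean convex set with $B_{\R^d}(o,1)$ and an unbounded cell is exactly one whose trace $\partial_\infty\mathcal C$ on the ideal boundary $\mathbb S^{d-1}$ is nonempty; I will use that, for a convex cell, the number of unbounded components of $\mathcal C\setminus B(o,r)$ (for suitable $r$) is governed by the number of connected components of $\partial_\infty\mathcal C$.

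\emph{Reduction to the zero cell.} The event $\{\exists\text{ unbounded cell}\}$ is invariant under $\Isom(\H^d)$, which acts ergodically on $\xi_\gamma$, so under the hypothesis this event has probability $1$. Since an unbounded cell is open with infinite volume, a Fubini/invariance argument exactly as in the proof of Lemma~\ref{lm:ChoosingPoints} shows $p:=\P(\mathcal C_\gamma\text{ unbounded})>0$, i.e.\ $\partial_\infty\mathcal C_\gamma\neq\varnothing$ with probability $p$.

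\emph{Producing three openings.} I would fix three pairwise disjoint spherical caps $\mathcal K_1,\mathcal K_2,\mathcal K_3\subset\mathbb S^{d-1}$ and set $A_i:=\{\partial_\infty\mathcal C_\gamma\cap\mathcal K_i\neq\varnothing\}$. Each $A_i$ is a \emph{decreasing} event in $\xi_\gamma$ (adding hyperplanes can only shrink $\mathcal C_\gamma$), and since the stabilizer of $o$ preserves $\mu_{d-1}$ and acts transitively on $\mathbb S^{d-1}$, the law of $\partial_\infty\mathcal C_\gamma$ is rotation-invariant, giving $\P(A_i)>0$. The FKG inequality for Poisson processes then yields $\P(A_1\cap A_2\cap A_3)\ge\prod_i\P(A_i)>0$, so with positive probability the zero cell reaches infinity inside all three caps.

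\emph{Surgery and conclusion.} Working on $A_1\cap A_2\cap A_3$, there are two geometric cases. If $\partial_\infty\mathcal C_\gamma$ is already disconnected across the three caps, the cell has at least three ends. Otherwise I insert finitely many hyperplanes to carve $\mathcal C_\gamma$ into three channels, one towards each cap, mutually separated all the way out to the ideal boundary (an inserted hyperplane separates the two ideal points it bounds) but joined through a central region around $o$. Such an insertion has positive probability: by \eqref{eq:KleinHyperplaneModel}, the hyperplanes meeting any Klein-bounded region whose normal distance is bounded away from $1$ carry finite $\mu_{d-1}$-mass, so the relevant configuration of $\xi_\gamma$ occurs with positive probability. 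Either way one obtains a single unbounded cell $\mathcal C$ with at least three unbounded ends; choosing $r$ so that $B(o,r)$ contains the junction makes $\mathcal C\setminus B(o,r)$ split into at least three unbounded components. Finally, since $\mathcal Y$ is independent of $\xi_\gamma$, with positive probability $\mathcal Y$ has exactly one point $y$ in $B(o,2r)$ and it lies in $\mathcal C$; intersecting these positive-probability events exhibits $y$ as an $r$-encounter point.

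\emph{Main obstacle.} The soft inputs (ergodicity, rotation invariance, FKG, insertion tolerance) only set the stage; the delicate step is the geometric one. I expect the real work to be showing that ``reaching three caps'' can \emph{always} be upgraded, by an insertion of positive probability, to three genuinely separated unbounded components of $\mathcal C\setminus B(o,r)$ \emph{without} assuming $\partial_\infty\mathcal C_\gamma$ has nonempty interior on $\mathbb S^{d-1}$ (fatness may fail precisely at the critical intensity). This requires a careful analysis of the connected components of $\partial_\infty\mathcal C_\gamma$ and of the convex geometry of cells near the ideal boundary in the Klein model, and is where the specific structure of Poisson hyperplanes, rather than general percolation principles, must be invoked.
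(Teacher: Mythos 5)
Your overall strategy coincides with the paper's: use rotation invariance of the law of the zero cell together with the FKG inequality to reach several disjoint ideal caps simultaneously with positive probability (this is Lemma~\ref{lm:inf_rays}, with $2^d$ caps centred at the points $v_{\boldsymbol\delta}$ rather than three arbitrary ones), then insert finitely many hyperplanes via insertion tolerance to separate these directions outside a fixed ball, and finally bring in the independent process $\mathcal Y$. The reduction to the zero cell and the FKG step are correct as you state them.

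However, the step you defer as the ``main obstacle'' is precisely the mathematical content of the lemma, and as written it is a genuine gap. Observing that ``an inserted hyperplane separates the two ideal points it bounds'' does not produce three unbounded components of $\mathcal C\setminus B(o,r)$: what is needed is a \emph{finite} family of hyperplanes whose union, restricted to the complement of $B(o,r)$, disconnects the cap regions from one another, while none of these hyperplanes meets any of the rays from $o$ to the caps. The paper's Lemma~\ref{lm:zero_is_encounter_point} does this concretely in the Klein model: it takes the compact set $G=\{u\in\mathbb S^{d-1}\colon u_i=0\text{ for some }i\}$, which separates the sphere into the $2^d$ orthant regions containing the caps $\capop(v_{\boldsymbol\delta},\varepsilon)$, covers it by finitely many caps $\capop(u_i,\varepsilon)$, and places one wall $H(u,t)$ with $u$ near $u_i$ and $a<t<b$ over each; the two conditions \eqref{eq:cap_1} (the walls cut off all of $\tanh(r)\cdot G$, hence block every crossing between orthants outside $B(o,r)$) and \eqref{eq:cap_2} (the walls miss every ray into the caps, which is what forces $r>r_0(d)=2\,\mathrm{artanh}(\sqrt{1-1/d})$ and $\varepsilon$ small) are then verified by explicit spherical-cap estimates via \eqref{eq:cap_radius_1} and \eqref{eq:cap_radius_r}. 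Nothing in your sketch supplies this construction or these estimates, and your case distinction on whether $\partial_\infty\mathcal C_\gamma$ is ``already disconnected'' becomes unnecessary once the surgery is carried out uniformly. A smaller point: at the end you need $\mathcal C_\gamma(y)\setminus B(y,r)$, not $\mathcal C_\gamma\setminus B(o,r)$, to have three unbounded components for the random point $y\in\mathcal Y$; the paper avoids this mismatch by computing the expected number of encounter points in $B(o,1)$ with Mecke's formula and isometry invariance rather than by intersecting events centred at $o$.
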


We emphasize that the standard proof of existence of encounter points for Bernoulli percolation in the non-uniqueness phase \cite{BK89} does not apply in our setting. This is because the vacant set $\mathcal{V}_\gamma$ does not satisfy versions of insertion and deletion tolerance where we modify the configuration only inside a bounded window (as for instance in Boolean percolation). This difficulty arises despite the fact that the underlying Poisson process of hyperplanes is insertion and deletion tolerant. To underscore this point, let us sketch an attempt to prove the existence of encounter points inspired by the Bernoulli percolation argument: Suppose that unbounded cells exist. Then we may choose a large radius $r>0$ such that, with positive probability, $B(o,r)$ intersects at least three {\em distinct} unbounded cells $\mathcal C_1,\mathcal C_2$ and $\mathcal C_3$. Now, we would like to modify the configuration inside $B(o,r)$ to connect these cells via the origin and thus turning the origin into an encounter point. However, since hyperplanes are unbounded, the removal or addition of any hyperplane also changes the configuration arbitrarily far {\em outside} of $B(o,r)$. Hence even though the hyperplane process is deletion tolerant, which in particular implies that removing all hyperplanes intersecting $B(o,r)$ is an absolutely continuous operation, this construction does not guarantee that the new cells containing $\mathcal C_1,\mathcal C_2$ and $\mathcal C_3$ are disjoint after removing $B(o,r)$. Our proof of Lemma~\ref{lem:encounter_pos_prob} instead relies on specific considerations for Poisson hyperplane processes and is given in Section~\ref{sec:ExistenceEncounter} below.

In the remainder of this section, we give a proof of Theorem~\ref{thm:main} assuming Lemma~\ref{lem:encounter_pos_prob}. This proof uses fundamentals about encounter points which were developed for the continuum setting in the recent work~\cite{MS25} (note that encounter points are called {\em trifurcations} there). We now recall these fundamentals.

\begin{lemma}\label{lm:MoreEncounter}
Let $\mathcal S$ be an isometry-invariant random open subset of $\mathbb H^d$ and let $\mathcal Y$ be an independent isometry-invariant simple point process on $\mathbb H^d$. Assume that the expected number of connected components of $\mathcal S \cap B(o,1)$ is finite. Let $r>0$. Then the following holds almost surely: If $y\in \mathcal Y$ is an $r$-encounter point, then every unbounded connected component of $\mathcal C_{\mathcal S}(y)\setminus B(y,r)$ contains infinitely many $r$-encounter points.
\end{lemma}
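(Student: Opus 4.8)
The plan is to prove this by an averaging argument built on the Mass Transport Principle (Theorem~\ref{theorem:MTP}), mimicking the discrete Burton--Keane scheme but executed in the continuum. Suppose the statement fails with positive probability, i.e.\ with positive probability there is an $r$-encounter point $y \in \mathcal Y$ and an unbounded connected component $U$ of $\mathcal C_{\mathcal S}(y) \setminus B(y,r)$ containing only finitely many $r$-encounter points. I would first argue that on this event one may single out, in an isometry-covariant way, a \emph{canonical} encounter point attached to $U$: among the finitely many $r$-encounter points lying in $U$, none ``branches further'' into $U$, so $U$ has a well-defined tree structure of encounter points with a last one before $U$ becomes encounter-point-free. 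The key structural fact to establish is that the encounter points inside a single unbounded component, together with the branching they induce, form a forest; this is exactly the combinatorial content of Burton--Keane, and it relies on the defining property that at an $r$-encounter point the complement $\mathcal C_{\mathcal S}(y)\setminus B(y,r)$ splits into at least three unbounded pieces.

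Next I would set up the mass transport. Define a diagonally-invariant measure $\mu$ on $\mathbb H^d \times \mathbb H^d$ by transporting a unit of mass from each $r$-encounter point $y$ (more precisely, from the ball $B(y,r)$, whose volume is a fixed constant depending only on $r$) to a suitably chosen recipient point determined by the forest structure. Concretely, each encounter point that has an unbounded ``encounter-point-free'' branch sends its mass toward that branch; because such a branch is unbounded and carries no further encounter points, the recipient cannot absorb the mass in a locally finite way. The hypothesis that the expected number of connected components of $\mathcal S \cap B(o,1)$ is finite is what guarantees $\mu(B(o,1)\times \mathbb H^d) < \infty$, so that Theorem~\ref{theorem:MTP} applies and forces $\mu(\mathbb H^d \times B) = c\cdot \vol(B)$ for a finite constant $c$. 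The contradiction will come from showing that the recipient side receives mass on a set of vanishing volume (a single terminal branch per configuration), while the sender side transmits a positive amount of mass with positive probability, violating the equality of in-flow and out-flow densities.

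More precisely, the argument I have in mind is the standard ``each finite subtree has more leaves than internal branch points'' counting, transported to the continuum: if some unbounded component contained only finitely many encounter points, then following the tree of encounter points outward we would reach a leaf encounter point $y$ possessing an unbounded branch $U$ with no encounter points at all. I would then transport mass from every point $z$ of $\mathcal Y$ lying in such a terminal branch back to the leaf encounter point $y$ governing it. Each leaf $y$ is isolated (the second bullet of the definition keeps other $\mathcal Y$-points out of $B(y,2r)$), so it can serve as the target for at most boundedly many leaves; yet the unbounded branch $U$ contains infinitely many points of the Poisson process $\mathcal Y$ almost surely on the event that it has infinite volume, so $y$ receives infinite mass with positive probability. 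Since the per-site out-flow is at most one unit, the in-flow density cannot equal the out-flow density, contradicting the conclusion of the Mass Transport Principle. Running this for every leaf simultaneously shows the bad event has probability zero, which is exactly the claim.

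The main obstacle I expect is the careful continuum bookkeeping in the transport: unlike the discrete setting, encounter points are points of an auxiliary Poisson process $\mathcal Y$ rather than vertices of a graph, and the ``branches'' of $\mathcal C_{\mathcal S}(y)\setminus B(y,r)$ are regions of $\mathbb H^d$, so I must formalize how an encounter point in one branch is assigned to a parent encounter point and verify that this assignment is measurable, isometry-covariant, and yields a genuine forest without cycles. In particular I need to rule out, almost surely, degenerate situations where two encounter points see each other through overlapping balls or where the decomposition into unbounded pieces is ambiguous; the isolation condition in the definition and the finiteness-of-components hypothesis are precisely the inputs that make these pathologies negligible. Verifying that the resulting measure $\mu$ is genuinely diagonally-invariant and has finite first-coordinate marginal on a ball is routine once the assignment is pinned down, so the conceptual weight of the proof lies entirely in the forest structure and the measurable selection of the leaf-to-branch correspondence.
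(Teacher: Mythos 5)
Your overall strategy --- a Burton--Keane-type contradiction via the Mass Transport Principle --- is exactly what the paper invokes: its entire proof of this lemma is a pointer ("this follows from the Mass Transport Principle; see [MS25, Lemma 4.2] for details"), so at the level of method you and the paper agree. Two steps of your sketch, however, are genuine gaps rather than routine bookkeeping.

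First, your contradiction hinges on the leaf encounter point $y$ receiving infinite mass because the barren branch $U$ ``contains infinitely many points of the Poisson process $\mathcal Y$ almost surely \emph{on the event that it has infinite volume}.'' The lemma is stated for an arbitrary isometry-invariant open set $\mathcal S$, and an unbounded connected component of $\mathcal C_{\mathcal S}(y)\setminus B(y,r)$ need not have infinite volume, hence need not contain infinitely many (or even any) points of $\mathcal Y$. You never establish infinite volume; you only condition on it, and on the complementary event your transport delivers finite mass and yields no contradiction. (For the paper's application the cells are convex and one can check that unbounded pieces of a convex cell minus a ball carry infinite volume, but that argument is absent and the lemma is stated in greater generality.) Second, the claim that ``the per-site out-flow is at most one unit'' presupposes that each $z\in\mathcal Y$ lies in the terminal branch of at most one governing leaf. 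A point $z$ can a priori lie in barren branches of several distinct encounter points --- none of these can lie in another's barren branch, but that alone does not make the assignment unique --- and without a bound on the expected multiplicity you cannot verify the hypothesis $\mu(B(o,1)\times\H^d)<\infty$ needed to apply Theorem~\ref{theorem:MTP} if each $z$ sends one unit to every governing leaf; if instead you normalize by the multiplicity, the inflow to $y$ is no longer obviously infinite. A smaller remark: the reduction to a leaf with an encounter-point-free branch, which you identify as the conceptual heart, can be bypassed --- if $U$ is an unbounded branch of $y$ containing only the finite set $F$ of encounter points, one may transport directly from $U$ to the finite set $F\cup\{y\}$ with mass split equally, which avoids the continuum forest combinatorics you flag as the main obstacle (though it does not resolve the two gaps above).
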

\begin{proof} This follows from the Mass Transport Principle (Theorem~\ref{theorem:MTP}); see \cite[Lemma~4.2]{MS25} for details.
\end{proof}

Lemma~\ref{lm:MoreEncounter} allows us to define an isometry-invariant forest on encounter points, which will be instrumental in our proof of Theorem~\ref{thm:main}. More precisely, in the setting of Lemma~\ref{lm:MoreEncounter}, consider independent ${\rm Unif}[0,1]$-distributed labels for the points of $\mathcal Y$ and define the random graph $\mathcal F:=(V,E)$ with vertex set $V$ consisting of all $r$-encounter points and edge set $E$ obtained as follows. First, for each $y\in \mathcal Y$, draw an oriented edge $(y,y')$ from $y$ to $y'\in \mathcal Y$ if $y'$ is the closest $r$-encounter point to $y$ (with respect to the distance in $\mathcal S$) among the $r$-encounter points in the same connected component of $\mathcal C_{\mathcal S}(y)\setminus B(y,r)$ as $y'$ -- in case of multiple choices, choose $y'$ with minimal label. Let $E$ consist of all edges obtained from this set of oriented edges by forgetting the orientation. We now summarize the main properties of this construction.

\begin{proposition}[Forest on encounter points]\label{prop:ForestEncounter}
Let $\mathcal S$ be an isometry-invariant random open subset of $\mathbb H^d$ and let $\mathcal Y$ be an independent isometry-invariant simple point process on~$\mathbb H^d$. Assume that the expected number of connected components of $\mathcal S \cap B(o,1)$ is finite. Let $r>0$. Let $\mathcal F=(V,E)$ be the graph on $r$-encounter points constructed above. Then the following hold almost surely.
\begin{itemize}
\item[{\rm(i)}] Every $v\in V$ has at least three and only finitely many neighbors.
\item[{\rm(ii)}] If $\mathcal F$ is non-empty, then it is a forest consisting of only infinite trees.
\end{itemize}
\end{proposition}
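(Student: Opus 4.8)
The plan is to establish (i) directly from the definitions and Lemma~\ref{lm:MoreEncounter}, and then to deduce (ii) by combining the local finiteness from (i) with a mass-transport/counting argument ruling out cycles and finite components.

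For part (i), fix an $r$-encounter point $v\in V$. By definition, $\mathcal C_{\mathcal S}(v)\setminus B(v,r)$ has at least three unbounded connected components $U_1,U_2,U_3,\dots$. By Lemma~\ref{lm:MoreEncounter}, each $U_i$ contains infinitely many $r$-encounter points, so in the oriented-edge construction there is (almost surely) a well-defined closest $r$-encounter point to $v$ inside each $U_i$; this produces at least one outgoing edge into each $U_i$ and hence at least three edges incident to $v$ after forgetting orientations. This gives the lower bound of three neighbors. For the upper bound I would argue that each $v$ has only finitely many neighbors: the outgoing edges from $v$ number at most the (almost surely finite) number of connected components of $\mathcal C_{\mathcal S}(v)\setminus B(v,r)$, while the incoming edges must be controlled by a separate argument. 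The key point here is that the set of $y$ whose closest encounter point is $v$ within a common component cannot accumulate; one way to see this is that any such $y$ lies at distance (in $\mathcal S$) at least that of the second-closest encounter point from $v$, and since $\mathcal Y$ is simple and locally finite and each encounter point requires an empty $B(\cdot,2r)$ in $\mathcal Y$, only finitely many points can select $v$ in any bounded region; combined with the fact that $v$ is selected by $y$ only if $v$ is the \emph{nearest} encounter point to $y$, a standard argument shows the in-degree is finite. This is where I expect the main technical care to be required.

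For part (ii), the first task is to rule out cycles. I would argue by contradiction using the orientation: each vertex $y$ emits edges toward strictly closer (in the $\mathcal S$-metric) encounter points in \emph{distinct} components of $\mathcal C_{\mathcal S}(y)\setminus B(y,r)$, and an edge $\{y,y'\}$ separates $\mathcal C_{\mathcal S}(y)$ into the component containing $y'$ and its complement. A cycle would force a closed sequence of such separations, which is geometrically impossible because removing the balls $B(y_i,r)$ along the cycle would disconnect the ambient cell into topologically incompatible pieces; more cleanly, one can use that the edge from $y$ into the component $U$ is the \emph{unique} first edge of any path from $y$ that enters $U$, so the graph is locally tree-like and cannot close up. Thus $\mathcal F$ is a forest.

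It remains to show that every tree in the forest is infinite, i.e., that there are no finite components. This is the step I would handle via the Mass Transport Principle (Theorem~\ref{theorem:MTP}), in the spirit of the Burton--Keane argument: a finite tree has at least one leaf, i.e.\ a vertex of degree one, but this contradicts part (i), which guarantees every vertex has degree at least three. Indeed, a finite tree on $n\ge 1$ vertices has a vertex of degree at most one (a leaf, or the single isolated vertex), whereas (i) forces minimum degree three; hence no finite tree can occur and every component must be infinite. This makes part (ii) a short deduction from part (i) once the forest property is established, and confirms that the genuine obstacle lies in the finiteness of the vertex degrees in part (i).
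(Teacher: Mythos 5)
The paper does not spell out this proof at all --- it defers entirely to \cite[p.~1352]{BLPS99} and \cite[Lemmas 4.3--4.4]{MS25} --- so your proposal is being measured against the standard argument those references contain. Your overall skeleton is the right one (degree bounds first, then acyclicity, then ``min degree $3$ forbids finite trees''), and the lower bound of three neighbours via Lemma~\ref{lm:MoreEncounter} and the final leaf-counting step are both correct. But there are two genuine gaps. First, the finiteness of the degree: your argument only controls points $y$ that select $v$ ``in any bounded region''. Nothing prevents, a priori, infinitely many far-away encounter points $y$ from having $v$ as the closest encounter point in the component of $\mathcal C_{\mathcal S}(y)\setminus B(y,r)$ containing $v$ --- the many encounter points that are closer to such a $y$ need not lie in that particular component, so the $2r$-separation of $\mathcal Y$ does not help. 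The standard way to close this (and the one implicit in the cited references) is the Mass Transport Principle: the expected out-degree per unit volume is finite (since the out-degree is bounded by the a.s.\ finite number of components of $\mathcal C_{\mathcal S}(y)\setminus B(y,r)$, which is where the hypothesis on components of $\mathcal S\cap B(o,1)$ enters), hence by Theorem~\ref{theorem:MTP} the expected in-degree per unit volume is finite as well, so in-degrees are a.s.\ finite. Your sketch does not contain this step, and the direct geometric route you gesture at does not work.

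Second, the acyclicity argument is not a proof. The claim that ``the edge from $y$ into the component $U$ is the unique first edge of any path from $y$ that enters $U$'' is false: a path in $\mathcal F$ starting at $y$ can leave through a different component, wander, and later enter $U$ via an edge between two other vertices, so nothing forces it to begin with $y$'s edge into $U$. Likewise, an edge $\{y,y'\}$ does not ``separate'' $\mathcal C_{\mathcal S}(y)$, and the appeal to ``topologically incompatible pieces'' is not an argument. The actual proof (as in \cite{BLPS99}) is an extremal-edge argument: given a cycle, one considers the edge of maximal $\mathcal S$-distance between its endpoints (ties broken by the ${\rm Unif}[0,1]$ labels, which is precisely why the labels appear in the construction) and derives a contradiction from the fact that the vertex which drew that edge already has a strictly closer encounter point in the relevant component supplied by the rest of the cycle. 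As written, your part (ii) would not survive scrutiny, and your part (i) upper bound is missing its key ingredient; the parts you do prove correctly (out-degree $\geq 3$, and ``forest with min degree $3$ has no finite components'') are the easy ones.
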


\begin{proof} This can be shown as in~\cite[p.~1352]{BLPS99}; see~\cite[Lemma 4.3 \& Lemma 4.4]{MS25} for details.
\end{proof}

With these preparations established, we proceed to the proof of Theorem~\ref{thm:main} assuming Lemma~\ref{lem:encounter_pos_prob}.

\begin{proof}[Proof of Theorem~\ref{thm:main}] Suppose that the theorem fails, i.e., that critical Poisson hyperplane percolation has unbounded cells with positive probability. Let $\mathcal Y$ be an independent Poisson process on $\mathbb H^d$ with intensity measure ${\rm vol}(\cdot)$. By Lemma~\ref{lem:encounter_pos_prob}, we can choose $r>0$ such that, with positive probability, there exists an $r$-encounter point. By ergodicity (Corollary~\ref{cor:ergodic}), there exist $r$-encounter points almost surely. We fix this $r$ and will refer to $r$-encounter points shortly as encounter points throughout this proof. 

To see that the expected number of connected components of $\mathcal{V}_{\gamma_c} \cap B(o,1)$ is finite,
observe that the number of such connected components can be bounded from above by~$2^N$, where $N$ is the number of hyperplanes intersecting the ball $B(o,1)$. Since the latter is Poisson distributed with finite mean, $2^N$ has finite expectation.
Hence we can apply Proposition~\ref{prop:ForestEncounter}.

Let $\mathcal F:=(V,E)$ denote the isometry-invariant random forest on encounter points from Proposition~\ref{prop:ForestEncounter}. Recall that $V\subset \mathcal V_{\gamma_c}$. We will subsequently use this embedding into~$\mathbb H^d$ without further mention.

Let $\varepsilon>0$. Let $\mathcal Z_\varepsilon$ be the union of hyperplanes from an independent Poisson hyperplane process with intensity $\varepsilon$. Then 
$$
\mathcal V_{\gamma_c} \setminus \mathcal Z_\varepsilon \overset{(d)}{=} \mathcal V_{\gamma_c+\varepsilon}.
$$
By definition of $\gamma_c$, $\mathcal V_{\gamma_c} \setminus \mathcal Z_\varepsilon$ has no unbounded cells almost surely.

 We define the configuration of a bond percolation model $\omega_\varepsilon$ on $\mathcal F$ as follows. For $[u,v]\in E$, let $[u,v]\in\omega_\varepsilon$ if and only if $u$ and $v$ are in the same cell of $\mathcal V_{\gamma_c} \setminus \mathcal Z_\varepsilon$. 

Since $V$ is locally finite and $\mathcal V_{\gamma_c} \setminus \mathcal Z_\varepsilon$ has no unbounded cells almost surely, $\omega_\varepsilon$ has no infinite components almost surely. 

For $v\in V$, let $K_{\varepsilon}(v)$ denote the $\omega_{\varepsilon}$-cluster of $v$ and let $\partial K_{\varepsilon}(v)$ denote the inner vertex boundary, i.e., the set of $u\in K_{\varepsilon}(v)$ such that there exists $w\notin K_{\varepsilon}(v)$ with $[u,w]\in E$. Since $\omega_{\varepsilon}$ has only finite components almost surely,
$$
m(v,u):=\mathbf 1\{v\in V, u\in \partial K_{\varepsilon}(v)\} \frac{1}{|\partial K_{\varepsilon}(v)|}
$$
is well-defined. Note that for every $v\in V$, $\sum_{u\in V} m(v,u)=1$. 

For measurable $A,B\subset\mathbb H^d$, let 
$$
\mu(A\times B):= \mathbb E \bigg[ \sum_{v,u\in V} \mathbf 1\{v\in A, u\in B\} m(v,u) \bigg]
$$
be the expected amount of mass transported from vertices in $A$ to vertices in $B$.
Then $\mu$ can be extended  to a non-negative diagonally-invariant Borel measure on $\mathbb H^d\times \mathbb H^d$. Moreover, for $A:=B(o,r)$, we have that $\mu(A\times\mathbb H^d)\le 1$ because there can be at most one encounter point in $A$. In fact,
\begin{equation} \label{equ:ExpMassOut2}
\mu(B(o,r)\times\mathbb H^d) = \mathbb P( V\cap B(o,r)\ne \varnothing),
\end{equation}
i.e.~the expected mass transported out of the $r$-ball equals the probability of seeing an encounter point in this region.

The Mass Transport Principle (Theorem~\ref{theorem:MTP}) yields that
\begin{equation} \label{equ:MTPcontr2}
\mu(B\times\mathbb H^d)=\mu(\mathbb H^d\times B)
\end{equation}
for every measurable $B\subset\mathbb H^d$ and in particular $B=B(o,r)$. To compute $\mu(\mathbb H^d\times B(o,r))$, we start with the following observation. For every $v\in V$,
$$
\sum_{u\in V} m(u,v) = \mathbf 1\{ v\in \partial K_\varepsilon(v) \}\frac{|K_\varepsilon(v)|}{|\partial K_\varepsilon(v)|} \le 2 \cdot \mathbf 1\{v \in \partial K_\varepsilon (v)\}
$$
almost surely because $\mathcal F$ is a forest such that every vertex has degree at least three (in fact, in a tree such that every vertex has degree at least three, every non-empty finite subset $K$ of vertices satisfies $2|\partial K|\ge |K|$).
Since $B(o,r)$ can contain at most one encounter point, we obtain that
\begin{equation} \label{equ:ExpMassIn}
\mu(\mathbb H^d\times B(o,r)) \le 2 \mathbb P\big( |V\cap B(o,r)| = 1, v\in V\cap B(o,r) \ \mbox{satisfies} \ v\in \partial K_\varepsilon(v)\big) 
\end{equation}
Conditionally on $\mathcal F$ and $v\in V\cap B(o,r)$, the probability that $v\in \partial K_\varepsilon(v)$ tends to zero as $\varepsilon\to0$ because $v$ is connected by bounded length paths to finitely many neighbors, which are unlikely to intersect $\mathcal Z_\varepsilon$ for small $\varepsilon$. The bounded convergence theorem yields
$$
\lim_{\varepsilon\to0} \mathbb P\big( |V\cap B(o,r)| = 1, v\in V\cap B(o,r) \ \mbox{satisfies} \ v\in \partial K_\varepsilon(v)\big) =0,
$$
which together with \eqref{equ:ExpMassOut2} contradicts \eqref{equ:MTPcontr2} because $\mathbb P( V\cap B(o,r)\ne \varnothing)>0$, independently of $\varepsilon$.
\end{proof}

\subsection{Proof of Lemma \ref{lem:encounter_pos_prob}} \label{sec:ExistenceEncounter}

Fix a \(\gamma > 0\) such that with positive probability \(\mathcal{V}_\gamma\) contains an unbounded cell. Equivalently, we may assume that the zero-cell \(\mathcal C_\gamma\) is unbounded with positive probability.
Indeed, if the zero-cell were bounded almost surely, then it would follow for any countable dense set $M\subseteq \H^d$ that 
\[ \P(\text{some cell of $\mathcal{V}_\gamma$ unbounded}) \leq \sum_{x \in M} \P(\mathcal{C}_\gamma(x)\text{ unbounded}) = \sum_{x \in M} \P(\mathcal{C}_\gamma\text{ unbounded}) = 0, \]
where we used the fact that any cell contains a point of $M$ in the first step and the fact that $\P(\mathcal C_\gamma(x) \text{ unbounded})$ is independent of $x$ by isometry-invariance in the second step. 

We start by showing that for sufficiently large $r>0$ the set \(\mathcal C_\gamma \setminus B(o,r)\) consists of at least \(2^d\) unbounded connected components with positive probability. Then we use this to prove that with positive probability \(B(o,1)\) contains an encounter point.

In this section we will work with the Klein model of $\H^d$ and identify $\partial\H^d$ with the unit sphere $\mathbb{S}^{d-1}$. Let us start with some technical preparations. For any geodesic ray $R = [o,u)$ starting at $o$ denote by $\theta(R) \coloneqq u \in\mathbb{S}^{d-1}$ the unique ideal point belonging to $R$. Further denote by 
\[
\capop(z,\alpha) :=\{u\in\mathbb{S}^{d-1}\colon {\rm arccos}\langle u,z\rangle <\alpha\}
 =  \{u\in\mathbb{S}^{d-1}\colon d_{\mathbb{S}^{d-1}}(u,z) <\alpha\}, \quad z\in\mathbb{S}^{d-1},
\]
an open spherical cap with center $z$ and spherical radius $\alpha\in (0,\pi]$. 


\begin{lemma}\label{lm:inf_rays}
    Let $\varepsilon \in (0,\arccos(1-2/d)/2)$ and let $\gamma > 0$ be fixed such that with positive probability \(\mathcal{V}_\gamma\) contains an unbounded cell. For any $\boldsymbol\delta = (\delta_1,\ldots,\delta_d)\in\{-1,1\}^d$ we define
    \begin{equation}\label{eq:cap_centers}
    v_{\boldsymbol\delta}:={1\over \sqrt{d}}(\delta_1,\ldots,\delta_d)\in \mathbb{S}^{d-1}.
    \end{equation}
    Then with positive probability there are $2^d$ geodesic rays $R_{\boldsymbol\delta}$, $\boldsymbol\delta\in\{-1,1\}^d$, starting at $o$, contained in $\mathcal C_\gamma$ and such that $\theta(R_{\boldsymbol\delta})\in {\rm cap}(v_{\boldsymbol\delta},\varepsilon)$.
    Moreover, the $2^d$ sets $\capop(v_{\boldsymbol\delta},\varepsilon)$ are pairwise disjoint.
\end{lemma}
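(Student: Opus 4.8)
The plan is to exploit the assumption that the zero cell $\mathcal C_\gamma$ is unbounded with positive probability to first extract a single unbounded geodesic ray in $\mathcal C_\gamma$, and then use isometry-invariance together with a union-bound/pigeonhole argument to produce $2^d$ such rays whose ideal endpoints fall into the prescribed caps. The final disjointness claim for the caps $\capop(v_{\boldsymbol\delta},\varepsilon)$ is purely geometric and should be disposed of first, since it fixes the admissible range of $\varepsilon$.

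First I would verify the disjointness of the caps. The centers $v_{\boldsymbol\delta}=\tfrac{1}{\sqrt d}(\delta_1,\ldots,\delta_d)$ are the normalized vertices of the cube $\{-1,1\}^d$. For two distinct sign-vectors $\boldsymbol\delta,\boldsymbol\delta'$ differing in $k\ge1$ coordinates, one computes $\langle v_{\boldsymbol\delta},v_{\boldsymbol\delta'}\rangle = (d-2k)/d \le (d-2)/d = 1-2/d$, so the pairwise spherical distance satisfies $d_{\mathbb S^{d-1}}(v_{\boldsymbol\delta},v_{\boldsymbol\delta'}) = \arccos\langle v_{\boldsymbol\delta},v_{\boldsymbol\delta'}\rangle \ge \arccos(1-2/d)$. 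Since the hypothesis restricts $\varepsilon < \arccos(1-2/d)/2$, the triangle inequality gives that the two caps of radius $\varepsilon$ cannot overlap, establishing the ``moreover'' part.

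Next I would establish the existence of the rays. Since $\mathcal C_\gamma$ is an open convex set (the interior of a convex polyhedron) which is unbounded with positive probability, on that event it must contain a geodesic ray from $o$: in the Klein model $\mathcal C_\gamma$ is the intersection with $B_{\R^d}(o,1)$ of a Euclidean convex set, and an unbounded such cell reaches the ideal boundary, so its closure meets $\mathbb S^{d-1}$ in a set of positive measure; any ideal point of $\mathcal C_\gamma$ yields a geodesic ray from $o$ contained in $\mathcal C_\gamma$ by convexity. The key reduction is then to note that the probability that $\mathcal C_\gamma$ contains a ray with ideal endpoint in a fixed cap $\capop(z,\varepsilon)$ is, by rotation-invariance of the model about $o$, the same for every $z\in\mathbb S^{d-1}$; since $\mathcal C_\gamma$ is unbounded with positive probability and $\mathbb S^{d-1}$ is covered by finitely many caps of radius $\varepsilon$, this common probability is strictly positive. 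Finally, to obtain all $2^d$ rays simultaneously I would use that the event ``$\mathcal C_\gamma$ contains rays into all of the caps $\capop(v_{\boldsymbol\delta},\varepsilon)$'' has positive probability: the simplest route is to observe that the set of ideal endpoints of $\mathcal C_\gamma$ is, conditionally on unboundedness, a convex subset of $\mathbb S^{d-1}$ of positive measure, and then apply a rotation $g\in\Isom(\mathbb H^d)$ fixing $o$ to align a positive-probability portion of this ideal-boundary set with the cube-vertex configuration; invariance of the law of $\mathcal C_\gamma$ under such rotations preserves the probability.

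The main obstacle I expect is the last step: upgrading from ``one ray in some cap'' to ``$2^d$ rays, one in each prescribed cap, simultaneously''. A single unbounded cell need not be large enough at the ideal boundary to reach all $2^d$ antipodal-type directions $v_{\boldsymbol\delta}$ at once, so a naive rotation argument may fail. I would therefore need to argue that, conditionally on unboundedness, the ideal-boundary trace of $\mathcal C_\gamma$ can with positive probability be made to contain a full neighborhood of each cube vertex --- most cleanly by showing the conditional law of this trace puts positive mass on configurations whose convex hull of ideal points contains all the $v_{\boldsymbol\delta}$, or alternatively by a separate geometric input guaranteeing that unbounded cells, when they occur, span a solid angle large enough to meet every cap. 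Making this ``spanning'' step rigorous, rather than the elementary disjointness and single-ray existence, is where the real work lies.
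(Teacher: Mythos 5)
Your treatment of the two elementary parts is fine and matches the paper: the disjointness of the caps follows exactly as you say from $\langle v_{\boldsymbol\delta},v_{\boldsymbol\delta'}\rangle\le 1-2/d$ and $\varepsilon<\arccos(1-2/d)/2$, and the existence of a single cap $\capop(v_0,\varepsilon)$ with $\mathbb P(\mathcal A(v_0,\varepsilon))>0$ (where $\mathcal A(v,\varepsilon)$ is the event that $\mathcal C_\gamma$ contains a ray from $o$ with ideal endpoint in $\capop(v,\varepsilon)$) follows from the finite covering of $\mathbb S^{d-1}$ together with rotation-invariance about $o$, which also gives $\mathbb P(\mathcal A(v_{\boldsymbol\delta},\varepsilon))=\mathbb P(\mathcal A(v_0,\varepsilon))$ for every $\boldsymbol\delta$.

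However, there is a genuine gap at precisely the step you flag as ``where the real work lies,'' and the route you sketch for it would not work. The ideal-boundary trace of a single unbounded convex cell can be arbitrarily small --- it may well be a single point --- so it need not have positive measure, and no rotation fixing $o$ can make one realization of this trace cover all $2^d$ caps $\capop(v_{\boldsymbol\delta},\varepsilon)$ simultaneously unless it already essentially covered the sphere; a rotation can align the trace with \emph{one} prescribed direction, not with $2^d$ spread-out directions at once. The missing idea is a correlation inequality rather than a geometric or rotational one: each event $\mathcal A(v,\varepsilon)$ is \emph{decreasing} in the Poisson hyperplane process $\xi_\gamma$ (adding hyperplanes can only destroy rays contained in $\mathcal C_\gamma$), so the FKG/Harris inequality for Poisson processes gives
$$
\mathbb P\Bigl(\bigcap_{\boldsymbol\delta\in\{-1,1\}^d}\mathcal A(v_{\boldsymbol\delta},\varepsilon)\Bigr)\;\ge\;\prod_{\boldsymbol\delta\in\{-1,1\}^d}\mathbb P\bigl(\mathcal A(v_{\boldsymbol\delta},\varepsilon)\bigr)\;=\;\mathbb P(\mathcal A(v_0,\varepsilon))^{2^d}\;>\;0,
$$
which yields all $2^d$ rays simultaneously with positive probability. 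Without this (or some substitute positive-association argument) your proof does not close.
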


\begin{proof}
    Under the assumptions of the lemma, we already observed that there is a positive probability that the zero-cell is unbounded and thus contains a geodesic ray.
    Note that the unit sphere is covered by finitely many spherical caps with spherical radius $\varepsilon$. Therefore, there exists $v_0\in\mathbb S^{d-1}$ such that $\mathbb P(\mathcal A(v_0,\varepsilon))>0$, where we denote by $\mathcal A(v,\varepsilon)$, $v\in\mathbb S^{d-1}$, the event that there exists a geodesic ray $R$ starting at $o$ contained in $\mathcal C_\gamma$ and satisfying $\theta(R)\in{\rm cap}(v,\varepsilon)$. Notice that $\mathcal A(v,\varepsilon)$ is a decreasing event for the Poisson process $\xi_\gamma$ (in the sense of \cite[Section~20.3]{LP17}). Since (Euclidean) rotations with center $o$ are isometries in the Klein model, we obtain that 
    $$
    \inf_{\boldsymbol\delta\in\{-1,1\}^d}  \mathbb P(\mathcal A(v_{\boldsymbol\delta},\varepsilon)) = \mathbb P(\mathcal A(v_0,\varepsilon))>0.
    $$
    The FKG inequality for the Poisson process (see \cite[Theorem 20.4]{LP17}) implies that 
    $$
    \mathbb P\bigg(\bigcap_{\boldsymbol\delta \in \{-1,1\}^d}  \mathcal A(v_{\boldsymbol\delta},\varepsilon)\bigg)\ge \mathbb P(\mathcal A(v_0,\varepsilon))^{2^d}>0,  
    $$
    which proves the first assertion.
    
    Since $d_{\mathbb{S}^{d-1}}(v_{\boldsymbol\delta},v_{\boldsymbol\delta'}) \geq \arccos(1-2/d)$ for  distinct $\boldsymbol\delta,\boldsymbol\delta' \in \{-1,1\}^d$, the sets $\capop(v_{\boldsymbol\delta},\varepsilon)$, $\boldsymbol\delta \in \{-1,1\}^d$, are pairwise disjoint.
\end{proof}




\begin{lemma}\label{lm:zero_is_encounter_point}
    Let $r > r_0(d)=2\,{\rm artanh}(\sqrt{1-1/d})$ and let $\gamma > 0$ be fixed such that such that with positive probability \(\mathcal{V}_\gamma\) contains an unbounded cell. Then with positive probability $\mathcal C_\gamma\setminus B(o,r)$ has at least $2^d$ unbounded connected components.
\end{lemma}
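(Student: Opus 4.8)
The plan is to exhibit, on an event of positive probability, the zero cell as a convex region with $2^d$ ``horns'' pointing towards the cube-vertex directions $v_{\boldsymbol\delta}$ while being confined to a Euclidean box in the Klein model, so that removing $B(o,r)$ cleanly severs the horns from one another. Concretely, I would fix a height $h\in(1/\sqrt d,\ \tanh(r)/\sqrt{d-1})$ --- this interval is non-empty precisely because $r>r_0(d)/2=\operatorname{artanh}\sqrt{1-1/d}$, and a fortiori because $r>r_0(d)$ --- together with a small $\varepsilon$ as in Lemma~\ref{lm:inf_rays}, and work towards showing that with positive probability (i) the zero cell contains geodesic rays $R_{\boldsymbol\delta}$ to all $2^d$ caps $\capop(v_{\boldsymbol\delta},\varepsilon)$, and (ii) the zero cell is contained in the box $Q_h:=\{x\colon |x_i|<h\ \text{for all }i\}$ (in Klein coordinates). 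Granting (i)--(ii), I claim the $2^d$ rays lie in $2^d$ distinct unbounded components of $\mathcal C_\gamma\setminus B(o,r)$, which is the assertion.

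For (ii) I would use that a Euclidean affine hyperplane $\{x_i=\pm h\}$ meets the Klein disc in a totally geodesic hyperplane and that $\mu_{d-1}$, by~\eqref{eq:KleinHyperplaneModel}, charges every neighbourhood of it positively. Thus the event $C$ that $\xi_\gamma$ contains, for each of the $2d$ faces $\{x_i=\pm h\}$, at least one hyperplane in a small neighbourhood of that face has positive probability, being an intersection of independent positive-probability events over disjoint regions of $A(d,d-1)$. On $C$ the zero cell, lying on the $o$-side of each of these $2d$ hyperplanes, is contained in a slight enlargement of $Q_h$, giving (ii). The point that lets me combine (i) and (ii) --- despite (i) being a decreasing and (ii) an increasing event, so that FKG gives the wrong inequality --- is that they depend on disjoint parts of the process: since $h>1/\sqrt d$ and each corner $v_{\boldsymbol\delta}$ has all coordinates equal to $\pm1/\sqrt d$, every ray towards a cap $\capop(v_{\boldsymbol\delta},\varepsilon)$ stays inside $\{|x_i|<h\}$ (for $\varepsilon$ small), and is therefore never crossed by the confining hyperplanes near the faces. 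Hence the ray event of Lemma~\ref{lm:inf_rays} is measurable with respect to the hyperplanes \emph{not} used for $C$, and $\mathbb P(\text{(i)}\cap C)>0$ follows by independence.

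Finally, on $\text{(i)}\cap C$ I would separate the horns by radial projection. The cell is convex, hence star-shaped about $o$ in the Klein ball, so it is described by a radial function $f\colon\mathbb S^{d-1}\to(0,1]$, and, recalling $B(o,r)=\{\|x\|<\tanh r\}$ by~\eqref{eq:DistanceKlein}, one has $\mathcal C_\gamma\setminus B(o,r)=\{t u\colon\ \tanh r\le t<f(u)\}$. By (ii), $f(u)\le\min_i h/|u_i|$, so the ``shadow'' $U_r:=\{u\colon f(u)>\tanh r\}$ is contained in $\{u\colon \max_i|u_i|<h/\tanh r\}\subseteq\{u\colon\max_i|u_i|<1/\sqrt{d-1}\}$, the last inclusion by $h\le\tanh(r)/\sqrt{d-1}$. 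On the unit sphere any point with some $u_i=0$ satisfies $\max_j|u_j|\ge 1/\sqrt{d-1}$, so $U_r$ is disjoint from every coordinate hyperplane $\{u_i=0\}$ and thus lies in the union of the open orthants. Each $R_{\boldsymbol\delta}$ exits $B(o,r)$ and projects radially into the orthant with sign pattern $\boldsymbol\delta$; a path in $\mathcal C_\gamma\setminus B(o,r)$ joining $R_{\boldsymbol\delta}$ to $R_{\boldsymbol\delta'}$ with $\boldsymbol\delta\ne\boldsymbol\delta'$ would project to a path in $U_r$ connecting two different orthants, hence crossing some $\{u_i=0\}$ --- impossible. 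So the $2^d$ rays lie in $2^d$ distinct components, each unbounded as it contains a ray to the ideal boundary. I expect the main obstacle to be exactly the step underlying this paragraph: convexity together with the rays only bounds the cell \emph{from within}, and a cell that bulges between two adjacent corners could otherwise reconnect two horns outside $B(o,r)$; the confinement event $C$ is what rules this out, and the delicate part is making precise that $C$ is independent of the ray event, i.e.\ that the confining hyperplanes are genuinely irrelevant to the existence of the corner rays.
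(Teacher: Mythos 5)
Your argument is correct, and while it starts from the same place as the paper -- Lemma~\ref{lm:inf_rays} supplying, with positive probability, rays from $o$ into the $2^d$ corner caps -- the separation mechanism is genuinely different. The paper adds, via insertion tolerance, finitely many hyperplanes whose normals cover the equatorial set $G=\{u:u_i=0\text{ for some }i\}$ at Euclidean distances $t\in(a,b)$ with $\tanh(r_0)<a<b<\tanh(r)$, and verifies through the two cap computations \eqref{eq:cap_radius_1}--\eqref{eq:cap_radius_r} that these walls sever $\partial B_{\R^d}(o,\tanh r)$ along $\tanh(r)\cdot G$ while missing the rays. You instead condition on the process already containing hyperplanes near the $2d$ box faces $\{x_i=\pm h\}$, confine the cell to (an enlargement of) $Q_h$, and separate by radial projection, using that the shadow $U_r$ avoids $G$ precisely when $h/\tanh(r)\le 1/\sqrt{d-1}$ -- which is where the hypothesis on $r$ enters, and in fact both proofs really only need $\tanh(r)>\sqrt{1-1/d}$, i.e.\ $r>r_0(d)/2$. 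Your route buys elementarity: it replaces the insertion-tolerance/absolute-continuity step by plain independence of the Poisson process on disjoint regions of $A(d,d-1)$, and replaces the covering of $G$ by caps with the observation that a box $Q_h$ with $1/\sqrt d<h\le\tanh(r)/\sqrt{d-1}$ protrudes from $B_{\R^d}(o,\tanh r)$ only near its $2^d$ corners; the paper's wall construction is less tied to the orthant geometry and separates the annulus directly. Two bookkeeping points you should make explicit when writing this up: choose the constants in the order $h$, then $\varepsilon$ small enough that the cone over the caps lies in $\{\max_i|x_i|\le 1/\sqrt d+\varepsilon\}\subset Q_h$, then the neighbourhood size $\delta$ of the faces small enough (relative to $h-(1/\sqrt d+\varepsilon)$) that every confining hyperplane misses that cone -- this is what makes the independence claim rigorous; and carry the enlargement $h''=h+O(\delta)$ through the final inequality, i.e.\ ensure $h''\le\tanh(r)/\sqrt{d-1}$, which is possible since $h$ was chosen strictly below $\tanh(r)/\sqrt{d-1}$. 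Neither is a gap, just an ordering of quantifiers to be recorded.
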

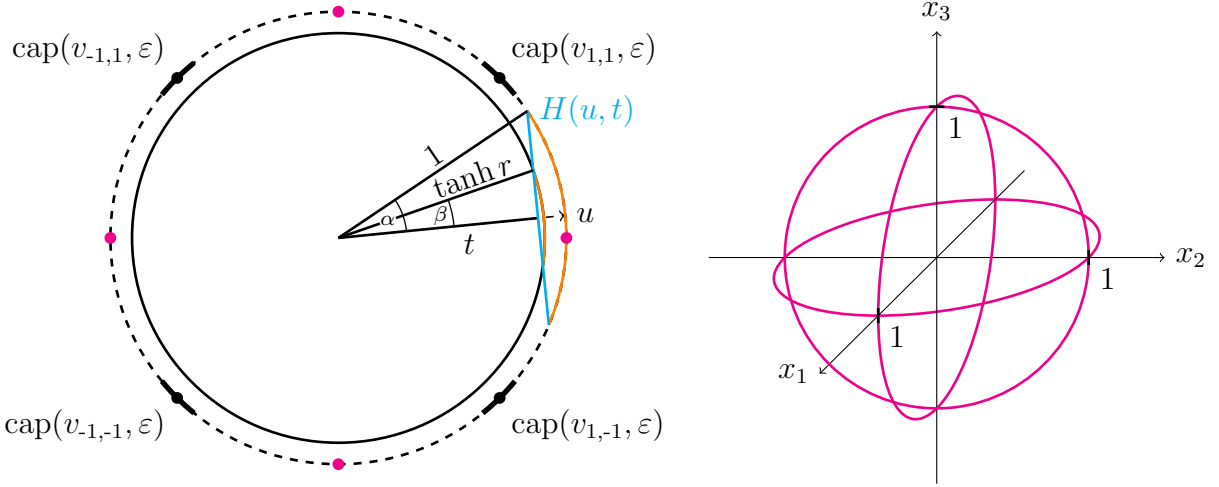
\begin{figure}
\begin{minipage}{.563\textwidth} 
        \centering
\begin{tikzpicture}[scale=3]
\draw[line width=1pt, dashed] (0,0) circle (1);
\draw[line width=1pt] (0,0) circle (0.9051482536448665);
\draw[line width=.6pt, ->, dashed] (0,0) -- (0.9950041652780258,0.09983341664682815) node[right] {$u$};
\draw[line width=1pt] (0,0) -- (0.8756036654446627,0.08785340664920878) node[midway, below=-2pt] {$\qquad t$};
\draw [color=orange, line width=1pt,domain=-22.628058625019747:34.087214527636206] plot ({ 0+1*cos(\x)}, {0+1*sin(\x)});
\draw [color=orange, line width=1pt,domain=-7.808043620743848:19.267199523360315] plot ({ 0+0.9051482536448665*cos(\x)}, {0+0.9051482536448665*sin(\x)});
\draw[line width=1pt,color=cyan] (0.9230219110839516,-0.38474738681234694) -- (0.8281854198053737,0.5604542001107645) 
              node[right] {$H(u,t)$};
\draw[line width=1pt] (0,0) -- (0.8281854198053737,0.5604542001107645) node[midway,above=-3pt,rotate=34.087214527636206] {$\quad 1$};
\draw[line width=1pt] (0,0) -- (0.8544509077230265,0.2986754214321088) node[midway,above=-3pt,rotate=19.267199523360315] {$\quad\qquad\tanh r$};
\draw[line width=1pt, fill=black, color=black] (0.7071067811865476,0.7071067811865476) circle (.02)
            node[anchor=south west] {${\rm cap} (v_{1,1},\varepsilon)$};
\draw[line width=1pt, fill=black, color=black] (0.7071067811865476,-0.7071067811865476) circle (.02)
                node[anchor=north west] {${\rm cap} (v_{1,\text{-}1},\varepsilon)$};
\draw[line width=1pt, fill=black, color=black] (-0.7071067811865476,-0.7071067811865476) circle (.02)
                node[anchor=north east] {${\rm cap} (v_{\text{-}1,\text{-}1},\varepsilon)$};
\draw[line width=1pt, fill=black, color=black] (-0.7071067811865476,0.7071067811865476) circle (.02)
            node[anchor=south east] {${\rm cap} (v_{\text{-}1,1},\varepsilon)$};
\draw [black, line width=2pt,domain=39.27042204869177:50.72957795130824] plot ({ 0+1*cos(\x)}, {0+1*sin(\x)});
\draw [black, line width=2pt,domain=129.27042204869176:140.72957795130824] plot ({ 0+1*cos(\x)}, {0+1*sin(\x)});
\draw [black, line width=2pt,domain=219.27042204869176:230.72957795130822] plot ({ 0+1*cos(\x)}, {0+1*sin(\x)});
\draw [black, line width=2pt,domain=309.2704220486918:320.7295779513082] plot ({ 0+1*cos(\x)}, {0+1*sin(\x)});
\draw[line width=1pt, magenta, fill=magenta] (1,0) circle (.02);
\draw[line width=1pt, magenta, fill=magenta] (0,1) circle (.02);
\draw[line width=1pt, magenta, fill=magenta] (-1,0) circle (.02);
\draw[line width=1pt, magenta, fill=magenta] (0,-1) circle (.02);
\draw [line width= .5pt,domain=5.729577951308232:34.087214527636206] plot ({ 0+0.3*cos(\x)}, {0+0.3*sin(\x)});
\draw [line width= .5pt,domain=5.729577951308232:19.267199523360315] plot ({ 0+0.51*cos(\x)}, {0+0.51*sin(\x)});
\node[fill=white,circle, inner sep=0pt] at (0.21625479451146884,0.07831898780501695) {$\scriptstyle\alpha$};
\node[] at (0.4490989629703403,0.0995495929623266) {$\scriptstyle \beta$};
\end{tikzpicture}
\end{minipage}
\hspace{.01\textwidth}
\begin{minipage}{.2\textwidth} 
    \centering
\begin{tikzpicture}[scale=2]
\begin{scope}[canvas is zx plane at y=0]
    \draw[line width=1pt, magenta] (0,0) circle (1cm);
    \draw[->] (0,-1.5) -- (0,1.5);
    \node[right] at (0,1.5) {$x_2$};
  \end{scope}

\begin{scope}[canvas is zy plane at x=0]
    \draw[line width=1pt, magenta] (0,0) circle (1cm);
    \draw[->] (-1.5,0) -- (2,0);
    \node[left] at (2,0) {$x_1$};
    \draw[line width=1pt] (1,-.05) -- (1,.05);
    \node[anchor = north west] at (1,0) {$1$};
    \draw[->] (0,-1.5) -- (0,1.5);
    \node[above] at (0,1.5) {$x_3$};
  \end{scope}

\begin{scope}[canvas is xy plane at z=0]
    \draw[line width=1pt, magenta] (0,0) circle (1cm);
    \draw[line width=1pt] (1,-.05) -- (1,.05);
    \node[anchor = north west] at (1,0) {$1$};
    \draw[line width=1pt] (-.05,1) -- (.05,1);
    \node[anchor = north west] at (0,1) {$1$};
  \end{scope}
\end{tikzpicture}
\end{minipage}

\caption{\textbf{Left}: Sketch of the situation in dimension $d=2$.
Pictured is a hyperplane $H(u,t)$ (cyan) and the caps that it `cuts off' from $\partial B_{\R^d}(o,1)$ and $\partial B_{\R^d}(o,\tanh(r))$ respectively (orange).
The opening angle $\alpha$ of the former satisfies $\cos(\alpha) = t$, while the opening angle $\beta$ of the latter satisfies $\cos(\beta) = t/\tanh(r)$.
Note that $\tanh(r)$ needs to be larger than $1/\sqrt{2}$ in order for the orange caps to be disjoint from the black caps.
In dimension $2$, the set $G$ consists of the four points $\{(\pm 1,0),(0,\pm 1)\}$ (magenta).
\textbf{Right}: The set $G$ (magenta) in dimension $d=3$.
In general, the set $G$ has dimension $d-2$.
\label{fig:sketch_caps}}
\end{figure}
\begin{proof} We again work in the Klein model. By \eqref{eq:KleinHyperplaneModel}, the Poisson hyperplane process with intensity $\gamma$ is identified with a Poisson process $\xi_\gamma^\text{Kl}$, with intensity measure $\gamma \cdot \mu_{d-1}^\text{Kl}$, of non-empty intersections $B_{\R^d}(o,1) \cap H'$, where $H'$ is a Euclidean hyperplane. The zero-cell is identified with the set
$$
\mathcal{C}^\text{Kl}_\gamma = \{x \in B_{\R^d}(o,1) \colon \text{$[o,x] \cap H = \emptyset$ for all $H \in \xi_\gamma^\text{Kl}$}\}.
$$
Recall that in the Klein model, a subset $M \subset B_{\R^d}(o,1)$ is unbounded if and only if its closure with respect to the usual Euclidean metric intersects $\partial B_{\R^d}(o,1)$. We also recall from~\eqref{eq:DistanceKlein} that $B_{\H^d}^\text{Kl}(o,r) = B_{\R^d}(o,\tanh(r))$.

Now let $r>r_0(d)$ be fixed. Let $\varepsilon\in(0,\arccos(1-2/d)/2)$. We denote by $\mathcal{E}_1 = \mathcal{E}_1(\xi_\gamma^\text{Kl})$ the event that for each $\boldsymbol\delta \in \{-1,1\}^d$, there exists a ray $R_{\boldsymbol\delta} = [0,x_{\boldsymbol\delta}) \subset B_{\R^d}(o,1)$ such that $R_{\boldsymbol\delta} \subset \mathcal{C}_\gamma^\text{Kl}$ and $\theta(R_{\boldsymbol\delta}) =  x_{\boldsymbol\delta} \in \capop(v_{\boldsymbol\delta},\varepsilon)$. We denote by $\mathcal{E}_2=\mathcal{E}_2(\xi_\gamma^\text{Kl})$ the event that $\mathcal C^\text{Kl}_\gamma\setminus B_{\H^d}^\text{Kl}(o,r)$ has at least $2^d$ unbounded connected components. 

The strategy of the proof is then as follows. By Lemma~\ref{lm:inf_rays}, the event $\mathcal E_1=\mathcal E_1(\xi_\gamma^\text{Kl})$ holds with positive probability. We will modify $\xi_\gamma^\text{Kl}$ by adding hyperplanes in such a way that there are ``walls" of hyperplanes which disconnect the sets ${\rm cap}(v_{\boldsymbol\delta},\varepsilon)$ in $B_{\mathbb R^d}(o,1)\setminus B_{\R^d}(o,\tanh(r))$ and, moreover, none of the additional hyperplanes intersects any of the rays from the origin to these caps. If $\xi_{\gamma,\text{mod}}^\text{Kl}$ denotes the modified process, this ensures that on the event $\mathcal E_1(\xi_{\gamma}^\text{Kl})$ the event $\mathcal E_2(\xi_{\gamma,\text{mod}}^\text{Kl})$ holds. Thus if the modification is also such that $\xi_{\gamma,\text{mod}}^\text{Kl}$ is absolutely continuous with respect to $\xi_{\gamma}^\text{Kl}$, it follows that $\mathcal E_2(\xi_{\gamma}^\text{Kl})$ holds with positive probability. We will show that such a modification can be achieved by adding uniformly drawn hyperplanes from finitely many sets, which thanks to insertion tolerance of the Poisson process will allow us to conclude the proof.

We now provide the details.
Write $H(u,t) \coloneqq H'(u,t) \cap B_{\R^d}(o,1)$, where $H'(u,t)$ is a Euclidean hyperplane with normal vector $u \in \mathbb{S}^{d-1}$ and Euclidean distance $t \in [0,1)$ in direction $u$ from the origin.

The set of ideal points $w \in \mathbb{S}^{d-1}$ that are ``cut off from $o$" by adding $H(u,t)$ form a spherical cap with spherical radius $\arccos(t)$, more precisely,
    \begin{equation}\label{eq:cap_radius_1}
         \{w \in \mathbb{S}^{d-1} \colon [o,w) \cap H(u,t) \neq \emptyset\} = \capop(u,\arccos(t)).
    \end{equation}
    For $t < \tanh(r)$, the set of points in $\partial B_{\R^d}(o,\tanh(r))$ that are ``cut off from $o$" form a spherical cap as well, namely
    \begin{equation}\label{eq:cap_radius_r}
    \{w \in \mathbb{S}^{d-1} \colon [o,\tanh(r)\cdot w) \cap H(u,t) \neq \emptyset\} = \capop(u,\arccos({t}/{\tanh(r)})),
    \end{equation}
    compare Figure \ref{fig:sketch_caps}.
    Define
    \[ G \coloneqq \{u \in \mathbb{S}^{d-1} \colon \text{$u_i=0$ for some $i=1,\ldots,d$} \}, \]
    see Figure \ref{fig:sketch_caps}.
    Since $G$ is compact, there exists a finite cover $G \subset \bigcup_{i=1,\ldots,k} \capop(u_i,\varepsilon)$ with $k\in\N$ and $u_1,\ldots,u_k \in G$. Fix constants $a,b$ with $\tanh(r_0) < a < b < \tanh(r)$ and define sets of hyperplanes
    \[ A_i \coloneqq \{ H(u,t) \colon\, d_{\mathbb{S}^{d-1}}(u,u_i) < \varepsilon, a < t < b \}, \quad i=1,\ldots,k. \]
    Note that $\mu_{d-1}^\text{Kl}(A_i)\in(0,\infty)$ for all $i = 1,\ldots,k$.
    Independently for each $i = 1,\ldots,k$, choose $H_i$ uniformly at random with distribution $\mu_{d-1}^\text{Kl}|_{A_i}$.
    Set 
    $$
    \xi_{\gamma,\text{mod}}^\text{Kl}\coloneqq \xi_\gamma^\text{Kl} \cup \{H_1,\ldots,H_k\}.
    $$ 
    Then $\xi_{\gamma,\text{mod}}^\text{Kl}$ is absolutely continuous in law with respect to $\xi_\gamma^\text{Kl}$ by insertion tolerance of the Poisson process (see~\cite[Theorem 1.4 \& Remark 1.6]{HS13}). We write $\mathcal{C}_{\gamma,\text{mod}}^\text{Kl}$ for the zero-cell associated to $\xi_{\gamma,\text{mod}}^\text{Kl}$.

    We now show that when $\varepsilon$ is chosen sufficiently small, the added hyperplanes cover $\tanh(r)\cdot G$ in the sense that
    \begin{equation}\label{eq:cap_1}
        [o,\tanh(r)\cdot u) \cap (H_1 \cup \cdots\cup H_k) \neq \emptyset, \quad \text{for all $u \in G$},
    \end{equation}
    and that further
    \begin{equation}\label{eq:cap_2}
        [o,v) \cap (H_1 \cup \cdots\cup H_k) = \emptyset, \quad \text{for all $v \in \capop(v_{\boldsymbol\delta},\varepsilon)$},\, \boldsymbol\delta\in \{-1,1\}^d.
    \end{equation}
    Note that on the event $\mathcal{E}_1(\xi_\gamma^\text{Kl})$, there exist rays $R_{\boldsymbol\delta} \subset \mathcal{C}_\gamma^\text{Kl}$, $\boldsymbol\delta \in \{-1,1\}^d$, with respective endpoints $\theta(R_{\boldsymbol\delta}) \in \capop(v_{\boldsymbol\delta},\varepsilon)$.
    Condition \eqref{eq:cap_2} ensures that these rays are contained in $\mathcal{C}_{\gamma,\text{mod}}^\text{Kl}$ and condition \eqref{eq:cap_1} guarantees that they intersect different connected components of $\mathcal{C}_{\gamma,\text{mod}}^\text{Kl} \setminus B_{\R^d}(o,\tanh(r))$. Thus to complete the proof of the lemma, it only remains to verify \eqref{eq:cap_1} and \eqref{eq:cap_2} for small $\varepsilon$.
    
    For $i = 1,\ldots,k$, it follows from \eqref{eq:cap_radius_1} and $H_i \in A_i$ that
    \[ \{w \in \mathbb{S}^{d-1} \colon [o,w) \cap H_i \neq \emptyset\} 
    \subset \capop(u_i,\arccos(a)+\varepsilon). \]
    Since
    \[ d_{\mathbb{S}^{d-1}}(u,v_{\boldsymbol\delta}) \geq \arccos(\sqrt{1-1/d}) > \arccos(a), \quad \boldsymbol\delta \in \{-1,1\}^d,\, u \in G, \]
    it follows that \eqref{eq:cap_2} is met as long as $\varepsilon$ is smaller than some constant $\varepsilon_0 = \varepsilon_0(a,d)$.

    On the other hand, \eqref{eq:cap_radius_r} and $H_i \in A_i$ yield that
    \[ \{w \in \mathbb{S}^{d-1} \colon [o,\tanh(r)\cdot w) \cap H_i \neq \emptyset\} \supset \capop(u_i,\arccos({b}/{\tanh(r)})-\varepsilon), \]
    for $i=1,\ldots,k$.
    Since $\arccos(b/\tanh(r))>0$, it follows that
    \[ \capop(u_i,\arccos({b}/{\tanh(r)})-\varepsilon) \supset \capop(u_i,\varepsilon), \]
    as long as $\varepsilon$ is smaller than the constant $\varepsilon_1 \coloneqq \arccos(b/\tanh(r))/2$, which implies \eqref{eq:cap_1} because $G$ is covered by the caps $\capop(u_i,\varepsilon)$.
    This concludes the proof.
\end{proof}

\begin{proof}[Proof of Lemma \ref{lem:encounter_pos_prob}]


Let $N$ be the number of encounter points $y$ in $B(o,1)$. Consider
\begin{align*}
 \E[N]&=\E\Big[\sum_{y\in\mathcal{Y}\cap B(o,1)}\one\{\mathcal{C}_{\gamma}(y)\setminus B(y,r)\text{ has $\ge 3$ unbounded connected components}\}\\
 &\qquad\qquad\times \one\{B(y,2r)\cap (\mathcal{Y}\setminus \{y\})=\emptyset\}\Big],
 \end{align*}
 where the expectation is taken with respect to $\mathcal{V}_{\gamma}$ and with respect to $\mathcal{Y}$. Since $\mathcal{Y}$ and $\mathcal{V}_{\gamma}$ are independent, and using Fubini's theorem and Mecke's formula (for $\mathcal Y$) we get
 \begin{align*}
 \E[N]&=\int_{B(o,1)}\P[\mathcal{C}_{\gamma}(y)\setminus B(y,r)\text{ has $\ge 3$ unbounded connected components}]\\
 &\hspace{3cm}\times\P[B(y,2r)\cap \mathcal{Y}=\emptyset]{\rm vol}(\dint y)\\
 &= {\rm vol}(B(o,1))\exp\big(-{\rm vol}(B(o,2r))\big)\\
 &\hspace{3cm}\times\P[\mathcal{C}_{\gamma}\setminus B(o,r)\text{ has $\ge 3$ unbounded connected components}]>0,
\end{align*}
where the second equality holds due to isometry-invariance of $\mathcal{V}_{\gamma}$ and $\mathcal{Y}$, and the last inequality holds by Lemma \ref{lm:zero_is_encounter_point}. Since $N$ is non-negative discrete random variable it follows that the event $\{N\ge 1\}$ has positive probability and the proof follows.
\end{proof}

\subsection{Phase transition}

We now prove the description of the phase transition for Poisson hyperplane percolation announced in Corollary~\ref{cor:phase}. One direction follows from Theorem~\ref{thm:main}. We give two proofs of the other direction; the first proof is a variation of standard arguments in discrete percolation theory, while the second proof is inspired by \cite[Section~4]{BS01} and \cite[Theorem~8.49]{LP16}.

\begin{proof}[First proof of Corollary \ref{cor:phase}] Let $d\ge2$, $\gamma>0$ and let $N_\gamma$ denote the number of unbounded cells in $\mathcal V_\gamma$.

\medskip

{\noindent\em Subcritical part.} Theorem~\ref{thm:main} implies that $N_{\gamma_c}=0$ almost surely. By the definition of  $\gamma_c$, $N_\gamma=0$ almost surely if $\gamma\ge\gamma_c$.

\medskip

{\noindent\em Supercritical part.} If $\gamma<\gamma_c$, the definition of $\gamma_c$ and monotonicity imply that $N_\gamma\ge1$ with positive probability. By ergodicity, there exists $k\in\{1,2,\ldots\}\cup\{\infty\}$ such that $N_\gamma=k$ almost surely. 

Suppose that $k<\infty$. It then follows that there exists $R>1$ such that $B(o,R)$ intersects all unbounded cells with positive probability. Recall from \cite{HS13} that the Poisson process $\xi:=\xi_\gamma$ is deletion tolerant. Since the number of elements of $\xi$ intersecting $B(o,R)$ is finite almost surely, the process $\xi'$ obtained by removing all of these elements is absolutely continuous with respect to $\xi$. On the event that $B(o,R)$ intersects all unbounded cells, the vacant set $\mathcal V'$ associated to $\xi'$ has a unique unbounded cell because each of the $\mathcal V'$-unbounded cells contains an unbounded $\mathcal V_\gamma$-cell and all unbounded $\mathcal V_\gamma$-cells are contained in the same unbounded $\mathcal V'$-cell by construction. Thus $\mathcal V'$ has a unique unbounded cell with positive probability. Absolute continuity implies that $\mathcal V_\gamma$ has a unique unbounded cell with positive probability, hence almost surely.
In other words, it holds that \(k=1\).
This leads to a contradiction as follows.

It follows from the Crofton formula that the two-point function of $\mathcal V_\gamma$ is given by $\tau_\gamma(w,z):=\mathbb P(\mathcal C_\gamma(w)=\mathcal C_\gamma(z)) = \exp(-\gamma d_{\mathbb H^d}(w,z)),$
see~e.g.~\cite{MR23}. Assuming the existence of a unique unbounded cell with positive probability, the FKG inequality for the Poisson process (see \cite[Theorem 20.4]{LP17}) implies
\begin{align*}
\inf_{w,z\in\mathbb H^d} \mathbb P( \mathcal C_\gamma(w)=\mathcal C_\gamma(z)) & \ge \inf_{w,z\in\mathbb H^d} \mathbb P( \mathcal C_\gamma(w) \text{ and } \mathcal C_\gamma(z) \text{ are unbounded}) \\
& \ge \, \, \mathbb P( \mathcal C_\gamma(o) \text{ is unbounded})^2 >0,
\end{align*}
which contradicts the fact that $\tau_\gamma(w,z)\to0$ as $d(w,z)\to\infty$.
\end{proof}

Here is the second, perhaps more geometric, proof.

\begin{lemma}[Limit points] \label{lm:limitpoins} Let $d\ge2$, $\gamma>0$ and let $\mathcal L_\gamma$ denote the set of points $z\in\partial \mathbb H^d$ such that there is a path in $\mathcal V_\gamma$ with limit $z$. Then almost surely $\mathcal L_\gamma=\varnothing$ or $\mathcal L_\gamma \subset\partial \mathbb H^d$ is dense.
\end{lemma}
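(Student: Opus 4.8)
The plan is to pass to the closure $F:=\overline{\mathcal L_\gamma}$, which is a random closed subset of the boundary sphere $\partial\H^d\cong\mathbb S^{d-1}$, and to prove that $F\in\{\varnothing,\mathbb S^{d-1}\}$ almost surely; since $F=\mathbb S^{d-1}$ is exactly the statement that $\mathcal L_\gamma$ is dense, this yields the lemma. The two structural inputs are: (i) every $g\in\Isom(\H^d)$ extends to a homeomorphism of $\overline{\H^d}$ whose boundary restriction is a Möbius transformation of $\mathbb S^{d-1}$, and a path in $\mathcal V_\gamma$ with limit $z$ is carried to a path in $g\mathcal V_\gamma=\mathcal V_\gamma(g\xi_\gamma)$ with limit $gz$, so that $gF=\overline{\mathcal L_\gamma(g\xi_\gamma)}$; and (ii) $\xi_\gamma$ is invariant in law under $\Isom(\H^d)$. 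Consequently the hitting functional $T(U):=\P(F\cap U\neq\varnothing)=\P(\mathcal L_\gamma\cap U\neq\varnothing)$, where the two expressions agree for open $U$, satisfies $T(gU)=T(U)$ for all open $U$ and all $g$.

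First I would record the key geometric fact that the boundary Möbius action is transitive on the family of all open spherical caps $\capop(z,\alpha)$, $z\in\mathbb S^{d-1}$, $\alpha\in(0,\pi)$, mapping caps of any radius onto caps of any other radius, since round balls form a single orbit under the conformal group (most conveniently checked in the upper half-space model). Combined with the invariance from (i)--(ii), this shows that $T(\capop(z,\alpha))=:p$ is a single constant, independent of both $z$ and $\alpha$.

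Next I would extract from $p$ two probabilities by monotone limits. Shrinking caps around a fixed $z$: the events $\{F\cap\capop(z,1/n)\neq\varnothing\}$ decrease to $\{z\in F\}$ because $F$ is closed, whence $\P(z\in F)=p$ for every $z$. Growing caps around a fixed $z_0$: the events $\{F\cap\capop(z_0,\pi-1/n)\neq\varnothing\}$ increase to $\{F\cap(\mathbb S^{d-1}\setminus\{-z_0\})\neq\varnothing\}$, so that $\P(F\neq\varnothing)=p+\P(F=\{-z_0\})$. To see the correction term vanishes, I would observe that the events $\{F=\{w\}\}$, $w\in\mathbb S^{d-1}$, are pairwise disjoint and, by transitivity of the boundary action on points, share one common probability; a countable subfamily then forces this probability to be $0$. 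Hence $\P(F\neq\varnothing)=p$ as well.

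Finally I would combine these: since $\{z\in F\}\subseteq\{F\neq\varnothing\}$ and both have probability $p$, it follows that $\P(F\neq\varnothing,\ z\notin F)=0$ for each fixed $z$. Taking a countable dense set $\{z_i\}\subset\mathbb S^{d-1}$ shows that, almost surely on $\{F\neq\varnothing\}$, $F$ contains every $z_i$ and is therefore all of $\mathbb S^{d-1}$, while off this event $\mathcal L_\gamma=\varnothing$. I expect the main obstacle to be the clean justification of input (i)---that the boundary extension of isometries carries $\mathcal L_\gamma$ equivariantly and that the conformal group is genuinely transitive on caps of all radii---together with verifying that $F$ is a bona fide random closed set and handling the routine continuity-of-measure bookkeeping in the two monotone limits.
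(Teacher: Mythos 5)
Your proof is correct, and it rests on the same backbone as the paper's: the boundary action of $\Isom(\H^d)$ is transitive on open spherical caps of all centers \emph{and} all radii (half-spaces form a single orbit), so distributional invariance of $\xi_\gamma$ forces the hitting probability $T(\capop(z,\alpha))=\P(\overline{\mathcal L_\gamma}\cap\capop(z,\alpha)\neq\varnothing)$ to be a single constant $p$. Where you diverge is in how this constancy is exploited. The paper conditions on the ``bad'' event $\{\mathcal L_\gamma\neq\varnothing,\ \mathcal L_\gamma\text{ not dense}\}$ (which is invariant, so the conditional law is still invariant), sets $\delta:=\P(\mathcal X\cap\capop(x,\alpha)=\varnothing)>0$, and derives $\delta\le(1-\delta)-(1-\delta)=0$ from a specific configuration of nested caps (two small antipodal caps inside a medium cap, with the complement of one small cap sitting inside a near-full cap). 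You instead avoid conditioning altogether: shrinking caps gives $\P(z\in F)=p$ for every $z$, growing caps gives $\P(F\neq\varnothing)=p+\P(F=\{-z_0\})$, the singleton term dies by disjointness plus point-transitivity, and then $\P(F\neq\varnothing,\,z\notin F)=0$ for a countable dense set of $z$ finishes the job. Your route is arguably cleaner and slightly more informative (it directly identifies $\P(z\in F)=\P(F=\mathbb S^{d-1})=\P(F\neq\varnothing)$ rather than only refuting the intermediate case), at the cost of the extra small argument ruling out singletons; both arguments apply verbatim to any isometry-invariant random closed set, which is the generality the paper records. The only points you should make sure to nail down are the ones you already flagged: measurability of $\{F\cap U\neq\varnothing\}$ (the paper also takes this for granted when it declares $\overline{\mathcal L}$ a random closed set) and the equivariance $gF=\overline{\mathcal L_\gamma(g\xi_\gamma)}$ under the boundary extension of isometries; neither is an obstacle.
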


In fact, this holds more generally. 

\begin{lemma}
Let \(\mathcal{Z}\) be a random closed subset of \(\H^d\), \(d \geq 2\), whose law is invariant under isometries of \(\H^d\).
Let \(\mathcal{V} \coloneqq \H^d \setminus \mathcal{Z}\) and define \(\mathcal{L}\) as the set of all points \(x \in \partial\H^d\) such that there exists a path in \(\mathcal{V}\) with limit \(x\).
The almost surely \(\mathcal{L} = \emptyset\) or \(\mathcal{L}\subset\partial \mathbb H^d\) is dense.
\end{lemma}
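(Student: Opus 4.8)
The plan is to prove a zero-one type dichotomy for the set of ideal limit points $\mathcal{L}$ using the ergodicity of the underlying isometry-invariant measure together with the transitivity of $\Isom(\H^d)$ on $\partial\H^d$. The key observation is that $\mathcal{L}$ is itself an isometry-covariant random closed subset of the boundary sphere: if $g \in \Isom(\H^d)$, then $g$ extends to a homeomorphism of $\partial\H^d$, and since $g\mathcal{V}$ has the same law as $\mathcal{V}$, the set $g\mathcal{L}$ associated to $g\mathcal{Z}$ has the same law as $\mathcal{L}$. Because a path in $\mathcal{V}$ with limit $x$ is mapped by $g$ to a path in $g\mathcal{V}$ with limit $gx$, we have the equivariance relation that the limit-point set of $g\mathcal{V}$ equals $g \cdot (\text{limit-point set of } \mathcal{V})$.

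First I would set up the dichotomy on the level of events. Fix a countable dense collection of open spherical caps $\{U_n\}_{n \in \N}$ forming a basis for the topology of $\partial\H^d$. For each $n$, consider the event $A_n := \{\mathcal{L} \cap U_n \neq \varnothing\}$; then $\mathcal{L}$ is dense precisely on the event $\bigcap_n A_n$, whereas $\mathcal{L} = \varnothing$ is the complement $\bigcap_n A_n^c$. The goal is to rule out all intermediate behavior. The crucial input is an ergodicity or triviality statement for the isometry-invariant model: the relevant result is the analogue of Corollary~\ref{cor:ergodic} (ergodicity of the hyperplane process under the isometry group), which gives that any isometry-invariant event has probability zero or one. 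I would apply this to the event $\{\mathcal{L} = \varnothing\}$, which is manifestly isometry-invariant, so it has probability zero or one; on the latter event we are done.

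The heart of the argument is the case $\P(\mathcal{L} = \varnothing) = 0$, where I must show $\mathcal{L}$ is almost surely dense. Here I would exploit the transitivity of the isometry group on pairs of ideal points together with a mass-transport or invariance argument. Specifically, since $\Isom(\H^d)$ acts transitively on $\partial\H^d$, for any two caps $U_m, U_n$ there is an isometry $g$ with $gU_m \subseteq U_n$ (up to shrinking), and invariance of the law gives $\P(\mathcal{L} \cap U_m \neq \varnothing) = \P(\mathcal{L} \cap U_n \neq \varnothing)$ whenever $U_m, U_n$ are caps of equal spherical radius. Thus $p := \P(A_n)$ is the same for all caps of a fixed radius. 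If $p = 0$ for caps of some radius, then by countable additivity over a finite cover of the sphere by such caps we would get $\P(\mathcal{L} = \varnothing) = 1$, a contradiction; hence $p > 0$ for every radius. To upgrade from positive probability to full probability I would invoke ergodicity once more: the event that $\mathcal{L}$ is dense is isometry-invariant, so it has probability zero or one, and since I can show it has positive probability the conclusion follows.

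I expect the main obstacle to be the final upgrade, namely rigorously establishing that $\{\mathcal{L} \text{ dense}\}$ is an honest isometry-invariant event of positive probability rather than merely that each cap is hit with positive probability. The subtlety is that "$\mathcal{L}$ hits every cap of radius $\rho$" is an intersection of uncountably many events, so I would first reduce density to the countable condition $\bigcap_n A_n$ using the basis $\{U_n\}$, and then argue that this countable intersection is isometry-invariant and apply the zero-one law directly to it; positivity then follows because its complement is contained in the null event $\{\mathcal{L} = \varnothing\}$ once one checks that $\mathcal{L}$ nonempty forces, by the transitivity-driven equidistribution of hitting probabilities, that $\mathcal{L}$ accumulates everywhere. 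A secondary technical point is verifying that $\mathcal{L}$ is genuinely closed (so that "dense" and "equal to $\partial\H^d$" coincide) and measurable as a function of $\mathcal{Z}$, which I would handle by approximating paths in $\mathcal{V}$ by piecewise-geodesic paths through a countable dense subset of $\H^d$.
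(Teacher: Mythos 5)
Your proposal has a genuine gap at exactly the step you flag as "the main obstacle," and the way you propose to close it is circular. You correctly reduce density to the countable intersection $\bigcap_n A_n$ with $A_n=\{\mathcal L\cap U_n\neq\varnothing\}$, and you correctly show each $A_n$ has positive probability (via rotation invariance plus a finite cover). But positive probability of each $A_n$ does not give positive probability of the intersection, and your justification --- that the complement of $\{\mathcal L \text{ dense}\}$ is contained in $\{\mathcal L=\varnothing\}$ ``once one checks that $\mathcal L$ nonempty forces $\mathcal L$ to accumulate everywhere'' --- assumes precisely the dichotomy you are trying to prove. Moreover, the only invariance you actually use on the boundary is rotational (caps of \emph{equal} spherical radius have equal hitting probability). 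The essential geometric input, which your argument never touches, is that $\Isom(\H^d)$ acts on $\partial\H^d$ by M\"obius transformations that are transitive on spherical caps of \emph{all} radii, since half-spaces of $\H^d$ form a single orbit. A secondary issue: you invoke ergodicity (Corollary~\ref{cor:ergodic}), but the lemma is stated for an arbitrary isometry-invariant random closed set $\mathcal Z$, which need not be ergodic; ergodicity is neither available nor needed here.

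The paper's proof closes the gap as follows: suppose $\P(\mathcal L\neq\varnothing,\ \mathcal L \text{ not dense})>0$ and condition on this (isometry-invariant) event, so that $\mathcal X:=\overline{\mathcal L}$ is an invariant random closed set with $\mathcal X\notin\{\varnothing,\mathbb S^{d-1}\}$. By the full transitivity on caps, $\delta:=\P(\mathcal X\cap\capop(x,\alpha)=\varnothing)$ is the same positive number for \emph{every} cap, regardless of radius. Nesting two small antipodal caps $\capop(\pm x,\varepsilon)$ inside a single cap $\capop(y,\alpha)$, and using that $\mathcal X\neq\varnothing$ forces $\mathcal X$ to meet $\capop(x,\pi-\varepsilon/2)$ whenever it misses $\capop(-x,\varepsilon)$, one computes $\delta\le(1-\delta)-(1-\delta)=0$, a contradiction. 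Without using the equality of hitting probabilities across caps of \emph{different} radii, no argument of the type you sketch can work: a rotation-invariant random closed set on $\mathbb S^{d-1}$ can perfectly well be nonempty and nowhere dense (e.g., a uniformly rotated fixed finite set), so rotational invariance alone cannot yield the dichotomy.
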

\begin{proof}
The proof follows roughly the lines of \cite[Lemma 4.3]{BS01}.
We work in the Klein model, so $\H^d$ is identified with the unit ball $B_{\R^d}(o,1)$ and $\partial \H^d$ with the sphere $\partial B_{\R^d}(o,1) = \mathbb{S}^{d-1}$.

Recall that the hyperplanes in this model are the nonempty intersections of \(B_{\R^d}(o,1)\) with Euclidean hyperplanes.
Consequently, hyperbolic half-spaces in this model are precisely intersections of the unit ball with Euclidean half-spaces whose limiting hyperplane intersects $B_{\R^d}(o,1)$.
The ideal limit points in $\partial \H^d$ of such a half-space appear as a closed spherical cap in the Klein model, i.e., they are of the form $\overline \capop(x,\alpha)$ for some \(x \in \mathbb{S}^{d-1}\) and \(\alpha \in (0,\pi)\).

Since any half-space can be mapped to any other half-space by an isometry of \(\H^d\), it follows that for any \(x_1,x_2 \in \mathbb{S}^{d-1}\) and \(\alpha_1,\alpha_2 \in (0,\pi)\) there exists an isometry \(g \in \Isom(\H^d)\) such that the action of \(g\) on \(\mathbb S^{d-1}\) maps \(\capop(x_1,\alpha_1)\) to \(\capop(x_2,\alpha_2)\).

Suppose now that \(\P(\mathcal L \neq \emptyset, \mathcal L \text{ not dense in }\partial\H^d) > 0\).
Conditioned on this event, \(\mathcal{X} \coloneqq \overline{\mathcal{L}}\) is a random closed subset of \(\mathbb{S}^{d-1}\), \(\mathcal X \notin \{\emptyset,\mathbb S^{d-1}\}\), and the distribution of \(\mathcal X\) is invariant under the actions of all \(g \in \Isom(\H^d)\).

It follows from \(\mathcal{X} \neq \mathbb S^{d-1}\) that there must be some \(x \in \mathbb{S}^{d-1}\) and \(\alpha \in (0,\pi)\) such that
\[ \P(\mathcal{X} \cap \capop(x,\alpha) = \emptyset) \eqqcolon \delta > 0. \]
By \(\Isom(\H^d)\)-invariance, this probability does not depend on the choice of \(x\) and \(\alpha\).

Let \(x,y \in \mathbb{S}^{d-1}\), \(y \notin \{x,-x\}\).
Choose \(\alpha \in (0,\pi)\) and \(\varepsilon\in(0,\pi/2)\) so that \(\capop(x,\varepsilon) \cup \capop(-x,\varepsilon) \subseteq  \capop(y,\alpha)\).
Recall that \(\mathcal X\) is not empty, so whenever \(\mathcal X \cap \capop(-x,\varepsilon) = \emptyset\), we have that \(\mathcal X\) intersects \(\capop(x,\pi-\varepsilon/2)\).
It follows that
\begin{align*}
    \delta
    &= \P(\mathcal X \cap \capop(y,\alpha) = \emptyset)\\
    &\leq \P(\mathcal X \cap \capop(-x,\varepsilon) = \emptyset, \mathcal X \cap \capop(x,\varepsilon) = \emptyset)\\
    &\leq \P(\mathcal X \cap \capop(x,\pi-\varepsilon/2) \neq \emptyset, \mathcal X \cap \capop(x,\varepsilon) = \emptyset)\\
    &= \P(\mathcal X \cap \capop(x,\pi-\varepsilon/2) \neq \emptyset) - \P(\mathcal X \cap \capop(x,\varepsilon) \neq \emptyset)\\
    &= (1-\delta) - (1-\delta) = 0,
\end{align*}
a contradiction. 

\end{proof}

\begin{lemma}[Percolation in half-spaces] \label{lm:Localization} Let $d\ge2$ and let $0<\gamma<\gamma_c$. Then for every half-space $W\subset\mathbb H^d$, $W\cap \mathcal V_\gamma$ has unbounded cells almost surely. 
\end{lemma}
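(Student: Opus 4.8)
The plan is to deduce the statement from the limit-point dichotomy (Lemma~\ref{lm:limitpoins}) combined with the geometry of half-spaces in the Klein model and an ergodicity argument. Fix an arbitrary half-space $W$ and work in the Klein model, identifying $\partial\H^d$ with $\mathbb S^{d-1}$. The ideal limit points of $W$ form a spherical cap $C\subset\mathbb S^{d-1}$ with nonempty interior $\inter(C)$. The geometric heart of the argument, which I expect to be the main point to get right, is the following: any path in $\mathcal V_\gamma$ converging to an ideal point $z\in\inter(C)$ must eventually enter and remain in $W$. Indeed, since $z$ is strictly interior to the ideal cap of $W$ and the points of the path converge to $z$ in the Euclidean sense, a tail of the path satisfies the defining Euclidean inequality of $W$; moreover, by \eqref{eq:DistanceKlein} this tail has infinite hyperbolic diameter. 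Hence the tail is an unbounded connected subset of $W\cap\mathcal V_\gamma$ and therefore lies in an unbounded cell of $W\cap\mathcal V_\gamma$.

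It then remains to produce, almost surely, an ideal point $z\in\mathcal L_\gamma\cap\inter(C)$. First I would observe that for $\gamma<\gamma_c$ the definition of $\gamma_c$ together with monotonicity gives that $\mathcal V_\gamma$ has an unbounded cell with positive probability. Such a cell is an unbounded convex set and thus contains a geodesic ray (as already used in the proof of Lemma~\ref{lm:inf_rays}); the ideal endpoint of this ray belongs to $\mathcal L_\gamma$, so $\P(\mathcal L_\gamma\neq\emptyset)>0$.

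Next I would upgrade this to almost-sure density. By Lemma~\ref{lm:limitpoins}, almost surely $\mathcal L_\gamma=\emptyset$ or $\mathcal L_\gamma$ is dense in $\partial\H^d$. The event $\{\mathcal L_\gamma=\emptyset\}$ is isometry-invariant, so by ergodicity (Corollary~\ref{cor:ergodic}) it has probability $0$ or $1$; since $\P(\mathcal L_\gamma\neq\emptyset)>0$, it has probability $0$, and thus $\mathcal L_\gamma$ is dense almost surely. Density means $\mathcal L_\gamma$ meets every nonempty open subset of $\mathbb S^{d-1}$, and in particular $\mathcal L_\gamma\cap\inter(C)\neq\emptyset$ almost surely. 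Choosing $z$ in this intersection and a path in $\mathcal V_\gamma$ with limit $z$ then yields an unbounded cell of $W\cap\mathcal V_\gamma$ by the geometric observation above, completing the proof. Note that this ergodic $0$--$1$ step is genuinely needed: on $\{\mathcal L_\gamma=\emptyset\}$ there is no path in $\mathcal V_\gamma$ to infinity inside $W$, so $W\cap\mathcal V_\gamma$ would have no unbounded cell, and a positive probability of this event would defeat the conclusion.

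The hard part will be the clean verification of the geometric observation, i.e., correctly matching the Euclidean description of half-spaces and their ideal caps in the Klein model with the hyperbolic notion of unboundedness, and ensuring that convergence of a path to an \emph{interior} point of $C$ (rather than a boundary point of the cap) forces the path to stay in $W$. Once this is in place, the probabilistic ingredients---positive probability of $\mathcal L_\gamma\neq\emptyset$ and its upgrade to almost-sure density via the dichotomy and ergodicity---are routine.
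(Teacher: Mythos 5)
Your proposal is correct and follows essentially the same route as the paper: deduce almost-sure existence of unbounded cells (hence a geodesic ray and a nonempty set $\mathcal L_\gamma$ of ideal limit points), apply the dichotomy of Lemma~\ref{lm:limitpoins} together with ergodicity to get density of $\mathcal L_\gamma$, and conclude because the ideal points reachable only through $W$ form a set with nonempty interior. The only cosmetic difference is that you invoke ergodicity on the event $\{\mathcal L_\gamma=\emptyset\}$ after the dichotomy, whereas the paper applies it first to the existence of unbounded cells; your explicit verification of the Klein-model geometry is a welcome elaboration of the paper's final sentence.
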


\begin{proof} By definition of $\gamma_c$, there exists an unbounded cell with positive probability if $\gamma<\gamma_c$. By ergodicity, this holds almost surely. Convexity of cells implies that every unbounded cell contains a geodesic ray and hence almost surely the set $\mathcal L_\gamma$ of ideal limit points is not empty. Lemma \ref{lm:limitpoins} implies that $\mathcal L_\gamma\subset\partial \mathbb H^d$ is dense. The claim follows because the set of ideal limit points which can only be reached through $W$ has non-empty interior.
\end{proof}

The same proof shows the following.

\begin{lemma}
Let \(\mathcal{Z}\) be a random closed subset of \(\H^d\), \(d \geq 2\), whose law is invariant under isometries of \(\H^d\).
Let \(\mathcal{V} \coloneqq \H^d \setminus \mathcal{Z}\) and define \(\mathcal{L}\) as the set of all points \(x \in \partial\H^d\) such that there exists a path in \(\mathcal{V}\) with limit \(x\).
If \(\mathcal{V}\) has unbounded connected components a.s., then for every half-space $W\subset\mathbb H^d$, $W\cap \mathcal V$ has unbounded connected components.
\end{lemma}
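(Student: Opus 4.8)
The plan is to run the argument of Lemma~\ref{lm:Localization} almost verbatim, isolating the single step in which the convexity of the cells of $\mathcal{V}_\gamma$ was used and replacing it. The skeleton is: first establish that $\mathcal{L}\neq\emptyset$ almost surely; then invoke the dichotomy of the preceding lemma (namely that almost surely $\mathcal{L}=\emptyset$ or $\mathcal{L}$ is dense in $\partial\H^d$) to upgrade this to ``$\mathcal{L}$ is dense almost surely''; and finally deduce the conclusion for an arbitrary half-space $W$ by a spherical-cap argument. Only the first step is sensitive to the loss of convexity, and it is where I expect the real work to lie.

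For the last step, which I expect to be routine, I would work in the Klein model, where, as recalled in the proof of the preceding lemma, $W$ is the intersection of the unit ball with a Euclidean half-space and its set of ideal points is a closed spherical cap $\overline{\capop(z,\alpha)}$. Let $U:=\capop(z,\alpha)$ denote its (nonempty, open) interior. The key geometric observation is that any path in $\H^d$ converging to a point of $U$ is eventually contained in $W$, since a Euclidean neighborhood of a point of $U$ lies strictly on the $W$-side of the bounding hyperplane. Because $\mathcal{L}$ is dense it meets $U$, so there exist $x\in\mathcal{L}\cap U$ and a path $\gamma$ in $\mathcal{V}$ with $\gamma(t)\to x$. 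The tail of $\gamma$ then lies in $W\cap\mathcal{V}$ and is unbounded and connected, so its connected component in $W\cap\mathcal{V}$ is unbounded, which is exactly the claim. Note that it is density of $\mathcal{L}$, rather than mere nonemptiness, that makes this work simultaneously for \emph{every} $W$.

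The main obstacle is the first step: producing, from an arbitrary unbounded connected component $C$ of $\mathcal{V}$, a path converging to a single ideal point, i.e.\ showing $\mathcal{L}\neq\emptyset$. In Lemma~\ref{lm:Localization} this was immediate, since convexity forces an unbounded cell to contain a full geodesic ray, and a geodesic ray converges to an ideal point. Without convexity the situation is genuinely subtler: $C$ is open, connected and unbounded, hence path-connected and traversed by some path $\gamma\colon[0,\infty)\to C$ leaving every bounded set, but the Euclidean accumulation set of $\gamma$ on $\mathbb{S}^{d-1}$ is a priori only a nonempty closed \emph{connected} subset and need not be a single point; a ``spiralling'' component can approach $\partial\H^d$ without any path in it converging. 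I also note that replacing $\mathcal{L}$ by the (easily nonempty and dense) set of sequential ideal accumulation points $\overline{\mathcal{V}}\cap\mathbb{S}^{d-1}$ does not suffice, as it loses the connectivity needed to produce an unbounded \emph{component} inside $W$. Moreover, since the lemma assumes only invariance and not ergodicity, it is the pointwise implication ``$C$ unbounded $\Rightarrow \mathcal{L}\neq\emptyset$'' that is really required, so one cannot pass from positive probability to almost sure via a $0$--$1$ law. I would therefore try to rule out the spiralling scenario by exploiting the openness of $C$ together with the isometry-invariance of the law of $\mathcal{Z}$; making this rigorous is the crux, and it is precisely the point at which the geodesic-ray shortcut of the convex case is unavailable. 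Once $\mathcal{L}\neq\emptyset$ is secured, the dichotomy and the cap argument conclude the proof exactly as for Lemma~\ref{lm:Localization}.
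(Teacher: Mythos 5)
Your overall route coincides with the paper's: show $\mathcal L\neq\emptyset$ almost surely, upgrade this to density of $\mathcal L$ via the preceding dichotomy lemma, and conclude with the spherical-cap argument in the Klein model. Your execution of the last two steps is correct. The problem is the first step, which you explicitly leave open: producing, from an unbounded connected component of $\mathcal V$, a path converging to a \emph{single} ideal point. This is a genuine gap and not a formality. For a deterministic open set the implication is simply false: a thin open neighbourhood of a logarithmic spiral in the Klein disc is open, connected and unbounded, yet every path in it that leaves every bounded set accumulates on all of $\mathbb S^{d-1}$, so $\mathcal L=\emptyset$ for that set --- and for such a set the conclusion of the lemma fails as well, since every connected component of its intersection with a half-space $W$ is confined to a bounded range of the spiral and hence bounded. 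So the isometry-invariance of the law must be used precisely at this step (beyond its use in the dichotomy lemma), and your proposal only gestures at how, without supplying an argument.

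In fairness, you have put your finger on exactly the point that the paper itself does not spell out: the paper proves $\mathcal L_\gamma\neq\emptyset$ in Lemma~\ref{lm:Localization} using convexity of the cells (an unbounded convex cell contains a geodesic ray), and then dismisses the general statement with the single sentence that ``the same proof'' applies, which does not address the loss of convexity. As submitted, though, your proposal is not a complete proof. To close it you would need either to strengthen the hypothesis to ``$\mathcal L\neq\emptyset$ almost surely'' (which the displayed definition of $\mathcal L$ in the statement suggests is the intended input, and which is all that is needed for the paper's application, where cells are convex), or to give an actual invariance-based argument --- for instance a mass-transport argument ruling out ``spiralling'' unbounded components of an invariant open set --- rather than the sketch you currently offer.
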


\begin{proof}[Second proof of Corollary~\ref{cor:phase}] Let $k\in\N$ and choose half-spaces \(W_1,\ldots,W_k\) that are well separated in the sense that $d(W_i,W_j):=\inf \{d_{\mathbb H^d}(w,z) : w\in  W_i, z\in W_j\} > 0$ for $i\neq j$.
It follows that, with positive probability, each pair of half-spaces is separated by a hyperplane, i.e., for each \(1 \leq i < j \leq k\) there exists an \(H \in \xi\) so that \(W_i\) and \(W_j\) lie on different sides of \(H\).

On this event Lemma~\ref{lm:Localization} yields that a.s.\ \(N_\gamma \geq k\).
It follows that \(\mathbb P(N_\gamma\ge k)>0\) and, by ergodicity, \(\mathbb P(N_\gamma = \infty) = 1\).
\end{proof}

\section{Multiple phase transitions} \label{sec:MultiplePhases}

We now show that there are in fact $d-1$ distinct critical parameters where the system undergoes a phase transition.

\begin{proposition}\label{prop:multiple_phase}
    Let $d \geq 2$ and let $\xi_\gamma$ be a Poisson hyperplane process on $\H^d$ with intensity $\gamma$.
    For $k \in \{2,\ldots,d-1\}$ define
    \[ \gamma_c(d,k) \coloneqq \frac{k-1}{d-1} \gamma_c(\H^d). \]

    If $\gamma \geq \gamma_c(d,k)$, then a.s.\ all cells in the tessellation induced by $\xi_\gamma$ have bounded $k$-dimensional faces.

    If $\gamma < \gamma_c(d,k)$, then the following holds a.s.:
    Some cells in the tessellation induced by $\xi_\gamma$ have unbounded $k$-dimensional faces.
    Further, for any distinct hyperplanes $H_1,\ldots,H_{d-k} \in \xi_\gamma$, whose intersection $P = H_1 \cap \cdots\cap H_{d-k}$ is a $k$-dimensional plane, $P$ contains infinitely many unbounded $k$-dimensional faces.
\end{proposition}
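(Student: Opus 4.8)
The plan is to reduce the statement to Theorem~\ref{thm:main} and Corollary~\ref{cor:phase} applied in dimension $k$. The starting observation is that every $k$-dimensional face of the tessellation lies in the intersection $P=H_1\cap\cdots\cap H_{d-k}$ of the $d-k$ hyperplanes of $\xi_\gamma$ that contain it, and that, conversely, once such a $P$ is a $k$-plane, the $k$-faces contained in $P$ are exactly the full-dimensional cells of the tessellation induced on $P$ by the traces $\{H\cap P : H\in\xi_\gamma,\ \dim(H\cap P)=k-1\}$. Since $P$ is isometric to $\H^k$ (and $k\ge2$), the strategy is to show that this induced tessellation is itself an isometry-invariant Poisson hyperplane tessellation on $\H^k$, and then to read off boundedness versus unboundedness of the $k$-faces from the known behaviour on $\H^k$.

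For the induced tessellation I would condition on the $(d-k)$-tuple $(H_1,\dots,H_{d-k})$ using the multivariate Mecke equation: by the Poisson property the remaining hyperplanes form an independent copy of $\xi_\gamma$, and their traces on $P$ form a Poisson process of $(k-1)$-planes in $P$ (distinct hyperplanes a.s.\ leave distinct traces, and only finitely many meet any bounded region, so the trace process is locally finite). The law of this trace process is invariant under the stabilizer of $P$ in $\Isom(\H^d)$, which surjects onto $\Isom(P)\cong\Isom(\H^k)$; since the isometry-invariant measure on $(k-1)$-planes of $\H^k$ is unique up to scaling, the trace process is a Poisson hyperplane process on $P$ with intensity $c(d,k)\,\gamma$ for some constant $c(d,k)>0$. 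To pin down $c(d,k)$ I would use the Crofton formula (as in the sectioning computations of \cite{HHT21,BHT23}): the rate at which the traces cross a geodesic in $P$ equals the ambient rate $\kappa_d\gamma$ at which hyperplanes of $\xi_\gamma$ cross that same geodesic, so $c(d,k)=\kappa_d/\kappa_k$ is a ratio of Crofton constants. The decisive point is then the identity $c(d,k)\,\gamma_c(d,k)=\gamma_c(\H^k)$, i.e.\ that the induced intensity reaches $\gamma_c(\H^k)$ exactly when $\gamma=\tfrac{k-1}{d-1}\gamma_c(\H^d)$. This I would verify directly from the explicit formula for $\gamma_c(\H^m)$: one checks that the critical crossing rate $\kappa_m\gamma_c(\H^m)$ is proportional to $m-1$ (in fact equal to $2(m-1)$), so that $\gamma_c(\H^k)/(k-1)$ and $c(d,k)^{-1}\gamma_c(\H^d)/(d-1)$ coincide.

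With the induced tessellation identified, I would assemble the two regimes through expected counts via the multivariate Mecke equation. For $\gamma\ge\gamma_c(d,k)$ the induced intensity is $\ge\gamma_c(\H^k)$, so by Theorem~\ref{thm:main} together with Corollary~\ref{cor:phase} on $\H^k$ the induced tessellation has no unbounded cell almost surely; hence the Palm probability that a given $(d-k)$-tuple produces an unbounded $k$-face is $0$, and Mecke shows the expected number of tuples whose intersection carries an unbounded $k$-face is $0$. As every $k$-face arises from some tuple, all $k$-faces are bounded almost surely. For $\gamma<\gamma_c(d,k)$ the induced intensity is $<\gamma_c(\H^k)$, so Corollary~\ref{cor:phase} gives infinitely many unbounded cells almost surely under the Palm distribution; consequently the Palm probability that a $k$-dimensional $P$ carries only finitely many unbounded $k$-faces is $0$, and Mecke upgrades this to: almost surely every $(d-k)$-tuple with $k$-dimensional intersection $P$ has infinitely many unbounded $k$-faces in $P$. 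To conclude that some cell has an unbounded $k$-face, I would note that with positive probability $d-k$ hyperplanes in general position pass through a fixed small ball and meet in a $k$-plane, so by ergodicity such a $P$ exists almost surely.

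I expect the main obstacle to be the intensity computation together with the threshold identity in the second paragraph: identifying the exact Crofton constant $c(d,k)$ for the hyperbolic section and checking that it conspires with the explicit value of $\gamma_c$ to produce the clean factor $\tfrac{k-1}{d-1}$. A secondary, more routine subtlety is the passage from ``almost surely for a fixed tuple'' to ``almost surely for all tuples simultaneously'', which is why I phrase both regimes through expected counts via the multivariate Mecke equation rather than through a union bound over the (random) tuples of $\xi_\gamma$.
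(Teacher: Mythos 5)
Your proposal is correct and follows essentially the same route as the paper: identify the trace process on a fixed $k$-plane $P$ as a Poisson hyperplane process on $\H^k$ with intensity $c(d,k)\gamma$, apply Corollary~\ref{cor:phase} (which subsumes Theorem~\ref{thm:main} at criticality), and use the multivariate Mecke formula to pass from a fixed $P$ to all $(d-k)$-tuples of hyperplanes of $\xi_\gamma$ simultaneously. The only cosmetic difference is that you pin down $c(d,k)$ via Crofton crossing rates and the identity $\kappa_m\gamma_c(\H^m)=2(m-1)$, whereas the paper computes the sectional measure directly (Lemma~\ref{lem:IntersecionIntensity}, deferred to \cite{BHT23}); both yield $c(d,k)=\omega_{d+1}\omega_k/(\omega_d\omega_{k+1})$ and hence the same factor $(k-1)/(d-1)$.
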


\begin{remark}
    Note that a.s.~the intersection of any collection of $d-k$ distinct hyperplanes $H_1,\ldots,H_{d-k} \in \xi_\gamma$ is either empty or a $k$-dimensional plane (this follows from \cite[Lemma~2.3]{BHT23} and the multivariate Mecke formula).
    Hence, one can replace the condition $\dim(H_1\cap\cdots\cap H_{d-k})=k$ in the above remark by $H_1\cap\cdots\cap H_{d-k} \neq\emptyset$.
\end{remark}


We start with the following observation (recall that $\omega_j \coloneqq 2\pi^{j/2}/\Gamma(j/2)$ denotes the surface area of the Euclidean unit ball in $\R^j$):
\begin{lemma} \label{lem:IntersecionIntensity}
    Let $0<k<d$.
    $L \in A(d,k)$ be any $k$-plane in $\H^d$.
    For $\mu_{d-1}$-almost all $H \in A(d,d-1)$, the intersection $L \cap H$ is either empty or a $(k-1)$-plane in $\H^d$.
    Identifying $L \cong \H^k$, we get
    \begin{equation*}
        \int_{A(d,d-1)} \one\{H\cap L \neq \emptyset,H \cap L \in \cdot\} \mu_{d-1}(\dint H)
        = c(d,k) \int_{A(k,k-1)} \one\{H' \in \cdot\} \mu_{k-1}(\dint H'),
    \end{equation*}
    with $c(d,k) \coloneqq ({\omega_{d+1}\omega_{k}})/({\omega_{d}\omega_{k+1}})$.
\end{lemma}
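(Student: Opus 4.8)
The plan is to compute the pushforward of $\mu_{d-1}$ under the intersection map $H\mapsto H\cap L$ explicitly in the Klein model. Since both $\mu_{d-1}$ and $\mu_{k-1}$ are isometry-invariant and any $k$-plane is the image of a fixed one under an isometry of $\H^d$, it suffices to treat the standard totally geodesic $k$-plane through the origin, $L=(\R^k\times\{0\})\cap B_{\R^d}(o,1)$. In the Klein model the induced metric on this slice is exactly the $k$-dimensional Klein metric, so $L$ is itself the Klein model of $\H^k$, the identification $L\cong\H^k$ is the obvious one, and $\mu_{k-1}$ is given by \eqref{eq:KleinHyperplaneModel} with $d$ replaced by $k$.

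First I would record the intersection map. Writing $H=H(u,t)$ with $u\in\mathbb{S}^{d-1}$, $t\in[0,1)$, and splitting $u=(u^{(k)},u^{\perp})\in\R^k\times\R^{d-k}$, one checks that $H\cap L$ is a genuine $(k-1)$-plane precisely when $u^{(k)}\neq 0$ and $t<\|u^{(k)}\|$, in which case it has Klein parameters
\[
(u',t')=\Big(\tfrac{u^{(k)}}{\|u^{(k)}\|},\ \tfrac{t}{\|u^{(k)}\|}\Big)\in\mathbb{S}^{k-1}\times[0,1)
\]
in $L\cong\H^k$. The exceptional set $\{u^{(k)}=0\}$ is a lower-dimensional subsphere and $\{t=0\}$ is a nullset, so all degenerate configurations are $\mu_{d-1}$-null; this simultaneously proves the ``almost all $H$'' assertion.

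The heart of the matter is the change of variables. Decomposing the surface measure on $\mathbb{S}^{d-1}$ along $\R^k\times\R^{d-k}$ via $u=(\cos\phi\,\xi,\sin\phi\,\zeta)$ with $\xi\in\mathbb{S}^{k-1}$, $\zeta\in\mathbb{S}^{d-k-1}$, $\phi\in[0,\pi/2]$, and substituting $t=t'\cos\phi$, the $\zeta$-integral contributes the full surface area $\omega_{d-k}$ while $\xi$ maps directly onto the normal direction $u'$ of $H\cap L$. This reduces the identity to the purely scalar statement
\[
I(t'):=\int_0^{\pi/2}\big(1-t'^2\cos^2\phi\big)^{-\frac{d+1}{2}}\cos^k\phi\,\sin^{d-k-1}\phi\,\dint\phi = C\,(1-t'^2)^{-\frac{k+1}{2}},\qquad t'\in[0,1),
\]
for a constant $C$ independent of $t'$, after which comparing the remaining prefactors against \eqref{eq:KleinHyperplaneModel} forces $c(d,k)=\tfrac{\omega_k\omega_{d-k}}{\omega_d}\,C$.

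I expect this scalar identity to be the only real obstacle, but it resolves cleanly. Substituting $w=\sin^2\phi$ turns $I(t')$ into the Euler-type integral
\[
I(t')=\tfrac12\,(1-t'^2)^{-\frac{d+1}{2}}\int_0^1 w^{\frac{d-k}{2}-1}(1-w)^{\frac{k-1}{2}}\,(1-zw)^{-\frac{d+1}{2}}\,\dint w,\qquad z=-\tfrac{t'^2}{1-t'^2},
\]
which equals $\tfrac12 B\big(\tfrac{d-k}{2},\tfrac{k+1}{2}\big)\,{}_2F_1\big(\tfrac{d+1}{2},\tfrac{d-k}{2};\tfrac{d+1}{2};z\big)$. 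Because the first and bottom parameters coincide, ${}_2F_1(\alpha,\beta;\alpha;z)=(1-z)^{-\beta}$ with $1-z=(1-t'^2)^{-1}$, so the factor $(1-z)^{-\frac{d-k}{2}}=(1-t'^2)^{\frac{d-k}{2}}$ combines with the prefactor $(1-t'^2)^{-\frac{d+1}{2}}$ to leave exactly $(1-t'^2)^{-\frac{k+1}{2}}$, yielding $C=\tfrac12 B\big(\tfrac{d-k}{2},\tfrac{k+1}{2}\big)$. Finally I would expand $\omega_j=2\pi^{j/2}/\Gamma(j/2)$ and $B(x,y)=\Gamma(x)\Gamma(y)/\Gamma(x+y)$ to check the arithmetic identity $\tfrac{\omega_k\omega_{d-k}}{\omega_d}\cdot\tfrac12 B\big(\tfrac{d-k}{2},\tfrac{k+1}{2}\big)=\tfrac{\omega_{d+1}\omega_k}{\omega_d\omega_{k+1}}$, confirming the claimed value of $c(d,k)$.
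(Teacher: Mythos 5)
Your proposal is correct, and it supplies a complete argument where the paper gives none: the paper simply states that the lemma ``follows essentially along the lines of \cite[Lemma 2.2 \& Lemma 2.3]{BHT23}'' and omits the proof, whereas you carry out a self-contained computation in the Klein model. I checked the key steps. The reduction to the standard slice $L=(\R^k\times\{0\})\cap B_{\R^d}(o,1)$ via isometry-invariance is legitimate (the induced Klein metric on that slice is the $k$-dimensional Klein metric, and totally geodesic subspaces are all isometric images of it). The intersection map $(u,t)\mapsto(u^{(k)}/\|u^{(k)}\|,\,t/\|u^{(k)}\|)$ with admissibility condition $t<\|u^{(k)}\|$ is right, and the degenerate set $\{u^{(k)}=0\}$ is indeed $\sigma_{d-1}$-null, which settles the ``$\mu_{d-1}$-a.e.'' claim. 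The bispherical decomposition with Jacobian $\cos^{k-1}\phi\,\sin^{d-k-1}\phi$ together with $dt=\cos\phi\,dt'$ produces exactly your $I(t')$, and the Euler-integral evaluation is clean: the parameters land on ${}_2F_1(\alpha,\beta;\alpha;z)=(1-z)^{-\beta}$, giving $C=\tfrac12 B(\tfrac{d-k}{2},\tfrac{k+1}{2})$ and the correct density $(1-t'^2)^{-(k+1)/2}$. Finally, keeping track of the paper's normalization $\sigma_{j-1}(\mathbb S^{j-1})=1$ (so that $\sigma_{j-1}=\mathcal H^{j-1}/\omega_j$), the prefactor comes out as $\tfrac{\omega_k\omega_{d-k}}{\omega_d}\cdot\tfrac12 B(\tfrac{d-k}{2},\tfrac{k+1}{2})=\tfrac{\omega_{d+1}\omega_k}{\omega_d\omega_{k+1}}$, matching $c(d,k)$. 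The only presentational suggestion is to state explicitly that you are using the probability normalization of $\sigma_{d-1}$ when matching constants, since the factor bookkeeping is the one place a reader could stumble; the mathematics itself is sound.
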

\begin{proof}
    The proof follows essentially along the lines of \cite[Lemma 2.2 \& Lemma 2.3]{BHT23} and is thus omitted.
\end{proof}
    


Let $0<k<d$ and fix a $k$-plane $P \in A(d,k)$.
If $\xi_\gamma$ is a Poisson hyperplane process in $\H^d$ with intensity $\gamma$, then the mapping theorem for Poisson processes combined with Lemma~\ref{lem:IntersecionIntensity} implies the following:
Taking the process $\xi_\gamma^{(P)}$ of all non-trivial intersections of planes from $\xi_\gamma$ with $P$ yields again a Poisson process.
When we identify $P \cong \H^k$, this is a Poisson process with values in $A(k,k-1)$ and intensity measure $c(d,k)\cdot\gamma\cdot\mu_{k-1}$.

The process $\xi_\gamma^{(P)}$ induces a tessellation of $P$.
Let $N_\gamma(P) = N_\gamma(P,\xi_\gamma)$ denote the number of unbounded cells in this tessellation.
It follows directly from Corollary \ref{cor:phase} that $N_\gamma(P) = 0$ a.s.\ when $c(d,k)\cdot\gamma \geq \gamma_c(\H^k)$ and $N_\gamma(P) = \infty$ a.s.\ otherwise.

Hence, for $c(d,k)\cdot\gamma \geq \gamma_c(\H^k)$, the multivariate Mecke formula yields
\begin{align*}
    &\E\mathop{\sideset{}{^{\neq}}\sum}_{H_1,\ldots,H_{d-k}\in\xi_\gamma} \one\{N_\gamma(H_1\cap\cdots\cap H_{d-k})\neq 0\}
    \cdot \one\{\dim(H_1\cap\cdots\cap H_{d-k})=k\}\\
    &\quad= \int_{A(d,d-1)^{d-k}} \P(N_\gamma(H_1\cap\cdots\cap H_{d-k})\neq 0) 
    \cdot \one\{\dim(H_1\cap\cdots\cap H_{d-k})=k\}\\
    &\hspace*{10cm}\times\mu_{d-1}^{d-k}(\dint(H_1,\ldots,H_{d-k})) = 0.
\end{align*}
It follows that, almost surely, $N_\gamma(H_1\cap\cdots\cap H_{d-k}) = 0$ for all distinct $H_1,\ldots,H_{d-k} \in \xi_\gamma$ with $\dim(H_1\cap\cdots\cap H_{d-k})=k$.
Since any $k$-dimensional face of a cell in the original tessellation of $\H^d$ induced by $\xi_\gamma$ is a cell in the tessellation induced by $\xi_\gamma^{(H_1\cap\cdots\cap H_{d-k})}$ for some such choice of $H_1,\ldots,H_{d-k} \in \xi_\gamma$, it follows that a.s.\ all $k$-dimensional faces of cells are finite.

For $c(d,k)\cdot\gamma < \gamma_c(\H^k)$, replacing $\one\{N_\gamma(H_1\cap\cdots\cap H_{d-k})\neq 0\}$ by $\one\{N_\gamma(H_1\cap\cdots\cap H_{d-k}) < \infty\}$ in the above argument yields the following:
Almost surely, it holds for any distinct $H_1,\ldots,H_{d-k} \in \xi_\gamma$ with $\dim(H_1\cap\cdots\cap H_{d-k})=k$ that $N_\gamma(H_1\cap \cdots\cap H_{d-k}) = \infty$.
Hence, a.s.\ some cells in the tesselation induced by $\xi_\gamma$ have unbounded $k$-dimensional faces and any $k$-dimensional $P = H_1\cap \cdots \cap H_{d-k}$, with $H_1,\ldots,H_{d-k} \in \xi_\gamma$ distinct, contains infinitely many such faces.
 
Since $\omega_j = 2 \pi^{j/2}/\Gamma(j/2)$, $j\in\N$, it holds that
\[ c(d,k) = \frac{\omega_{d+1}\omega_{k}}{\omega_{d}\omega_{k+1}} = \frac{\Gamma(\frac{d}{2})\Gamma(\frac{k+1}{2})}{\Gamma(\frac{d+1}{2})\Gamma(\frac{k}{2})}, \]
and further 
\[ \gamma_c(\H^k) = \frac{(k-1)^2 \sqrt{\pi}\Gamma(\frac{k-1}{2})}{\Gamma(\frac{k}{2})} = \frac{2\sqrt{\pi}(k-1)\Gamma(\frac{k+1}{2})}{\Gamma(\frac{k}{2})}. \]
It thus follows for $2 \leq k \leq d-1$ that
\[ \gamma_c(d,k) = \frac{\gamma_c(\H^k)}{c(d,k)} = \frac{2\sqrt{\pi}(k-1)\Gamma(\frac{k+1}{2})}{\Gamma(\frac{k}{2})} \frac{\Gamma(\frac{d+1}{2})\Gamma(\frac{k}{2})}{\Gamma(\frac{d}{2})\Gamma(\frac{k+1}{2})}  = \frac{k-1}{d-1} \gamma_c(\H^d). \]
Proposition \ref{prop:multiple_phase} now follows readily.


\begin{remark}
    If we choose the normalization of the invariant measure $\mu_{d-1}$ differently by setting $\widetilde\mu_{j-1} \coloneqq \frac{\omega_j}{\omega_{j+1}} \mu_{j-1}$ for $j \geq 2$, the constants in Lemma \ref{lem:IntersecionIntensity} become $\widetilde c(d,k) = 1$ and the critical intensities
    read $\widetilde\gamma_c(\H^d) = 2\pi(d-1)$, $\widetilde\gamma_c(d,k) = 2\pi(k-1) = \widetilde\gamma_c(\H^k)$.
\end{remark}

\printbibliography

\newpage
\appendix

\section{Ergodicity}

In this section, we show that homogeneous Poisson hyperplane processes are mixing and hence ergodic. We closely follow the treatment of the Euclidean case in \cite[Chapter~7]{HS} to which we refer the reader for further details.
A sketch for the proof of ergodicity in the hyperbolic setting can also be found in \cite[Proposition 2.1]{BT15}.

We introduce the notation \(\mathcal{F}'(\H^d)\) for the space of non-empty closed subsets of \(\H^d\), equipped with the Fell topology and the Borel \(\sigma\)-Algebra derived from it.
We further write \(N_s(\mathcal{F}'(\H^d))\) and \(\mathcal{N}_s(\mathcal{F}'(\H^d))\) for the space of simple counting measures on \(\mathcal{F}'(\H^d)\) and the \(\sigma\)-algebra generated by the evaluation maps \(\xi \mapsto \xi(A)\), \(A \in \mathcal{F}'(\H^d)\), respectively.

Let \(\Isom(\H^d)\) denote the isometry group of \(\H^d\) and \(\Isom^+(\H^d)\) the subgroup of orientation-preserving isometries.
We say that a sequence \(\varphi_n\) in \(\Isom(\H^d)\) goes to infinity and write \(\varphi_n \to \infty\) when \(d(x,\varphi_n(x)) \to \infty\) for some (and hence for all) \(x \in \H^d\).
(Note that this turns out to be equivalent to saying that every compact set in \(\Isom(\H^d)\), equipped with the compact-open topology, contains only finitely many \(\varphi_n\).)

We call a stationary simple point process \(\xi\) in \(\mathcal{F}'(\H^d)\) \emph{mixing} when
\begin{equation}\label{eq:mixing}
    \lim_{\substack{\varphi \in \Isom^+(\H^d),\\\varphi \to \infty}} \P(\xi \in (A \cap \varphi(B)) = \P(\xi \in A) \P(\xi \in B)
\end{equation}
holds for all \(A,B \in \mathcal{N}_s(\mathcal{F}'(\H^d))\).

\begin{theorem}
    \label{thm:mixing}
    Let \(\xi\) be a homogeneous Poisson process of hyperplanes in \(\H^d\) with intensity \(\gamma > 0\).
    When interpreted as a simple point process in \(\mathcal{F}'(\H^d)\) concentrated on the space of hyperplanes, \(\xi\) is mixing.
\end{theorem}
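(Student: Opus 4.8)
The plan is to follow the classical route for proving mixing of Poisson processes, as in \cite[Chapter~7]{HS}, the only genuinely hyperbolic input being a single geometric estimate. For a set $K\subseteq\H^d$ write $\mathcal I(K):=\{H\in A(d,d-1)\colon H\cap K\neq\emptyset\}$ for the hyperplanes meeting $K$, and recall that $\mu_{d-1}(\mathcal I(K))<\infty$ whenever $K$ is compact, so that $\xi\cap\mathcal I(K)$ is almost surely a finite point process. Call an event \emph{local at $K$} if it lies in $\sigma(\xi\cap\mathcal I(K))$. By a standard monotone class argument (the local events form a $\cap$-stable algebra generating $\mathcal N_s(\mathcal F'(\H^d))$, and dominated convergence lets one pass the limit in \eqref{eq:mixing} through countable operations), it suffices to verify \eqref{eq:mixing} for $A$ local at a compact $K_1$ and $B$ local at a compact $K_2$. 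For $\varphi\in\Isom^+(\H^d)$ the event $\{\varphi\xi\in B\}$ is then local at $L:=\varphi^{-1}K_2$, and since $\varphi\to\infty$ one checks that $d(K_1,L)\to\infty$.

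First I would exploit independence of the Poisson process over disjoint regions. The sets $\mathcal I(K_1)\cap\mathcal I(L)$, $\mathcal I(K_1)\setminus\mathcal I(L)$ and $\mathcal I(L)\setminus\mathcal I(K_1)$ are pairwise disjoint, so the restrictions $\xi_0,\xi_1,\xi_2$ of $\xi$ to them are independent Poisson processes. On the event $\{\xi_0=0\}$ that no hyperplane meets both $K_1$ and $L$, the event $\{\xi\in A\}$ is determined by $\xi_1$ and $\{\varphi\xi\in B\}$ by $\xi_2$, hence they are conditionally independent there. Writing $m:=\mu_{d-1}(\mathcal I(K_1)\cap\mathcal I(L))$, so that $\P(\xi_0\neq0)=1-e^{-m}\le m$, a short computation using the isometry-invariance $\P(\varphi\xi\in B)=\P(\xi\in B)$ gives
\[
\bigl|\P(\xi\in A,\ \varphi\xi\in B)-\P(\xi\in A)\,\P(\xi\in B)\bigr|\le C\,m
\]
for an absolute constant $C$. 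Thus everything reduces to the geometric claim that $m=\mu_{d-1}(\mathcal I(K_1)\cap\mathcal I(L))\to0$ as $d(K_1,L)\to\infty$.

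The hard part, and the only place hyperbolicity genuinely enters, is this last estimate; it is also exactly where the unboundedness of hyperplanes defeats the naive argument, since one hyperplane can meet two arbitrarily distant compact sets. To prove it I would enlarge $K_1,L$ to balls $B(x,R),B(y,R)$ and pass to the hyperboloid model, where a hyperplane $H$ is given by a spacelike unit normal $\nu$ with $\sinh\operatorname{dist}(z,H)=|\langle z,\nu\rangle|$ (Lorentzian inner product). The two hitting conditions read $|\langle x,\nu\rangle|<\sinh R$ and $|\langle y,\nu\rangle|<\sinh R$. Writing $y=\cosh\rho\,x+\sinh\rho\,u_{xy}$ with $\rho:=d(x,y)$ and $u_{xy}$ the unit tangent at $x$ pointing to $y$, the second condition becomes
\[
\bigl|\cosh\rho\,\langle x,\nu\rangle+\sinh\rho\,\langle u_{xy},\nu\rangle\bigr|<\sinh R,
\]
which together with the first confines $\langle u_{xy},\nu\rangle$ to an interval of length $2\sinh R/\sinh\rho$. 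Hence the admissible normals form a slab of thickness $O(1/\sinh\rho)$ cut out of the fixed finite-measure set $\{|\langle x,\nu\rangle|<\sinh R\}$, and integrating the invariant measure \eqref{eq:def_intensity_measure} over it yields $m=O\!\bigl(1/\sinh d(x,y)\bigr)\to0$, in fact with exponential decay. The conceptual content is that reaching two far-apart balls forces the normal direction of an admissible hyperplane into an asymptotically degenerate family.

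I expect the monotone class reduction and the Poisson decomposition to be routine bookkeeping, so the real obstacle is the transversality estimate of the last paragraph; the constants and the precise parametrization of the invariant measure there are the only computations requiring care. Finally, mixing implies ergodicity in the usual way, which is what is invoked as Corollary~\ref{cor:ergodic} in the proofs of Theorem~\ref{thm:main} and Corollary~\ref{cor:phase}.
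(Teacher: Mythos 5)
Your proposal is correct and its skeleton coincides with the paper's: both follow the Euclidean treatment in \cite[Chapter~7]{HS}, reduce via a monotone class / locality argument to events depending on hyperplanes hitting two compact sets, use independence of the Poisson process on disjoint parts of $A(d,d-1)$, and thereby reduce everything to showing that $\mu_{d-1}\bigl(\mathcal I(K_1)\cap \mathcal I(\varphi^{-1}K_2)\bigr)\to 0$ as $\varphi\to\infty$. The genuine difference is in how this last, only substantive, step is established. The paper argues qualitatively: along a divergent sequence $\varphi_n$ one may assume the centers $\varphi_n(x)$ march along a fixed geodesic ray with ideal endpoint $y^*$, the $\limsup$ of the hitting sets $[B(x,r)]\cap[B(\varphi_n(x),r)]$ is contained in the set of hyperplanes through $B(x,r)$ having $y^*$ as an ideal limit point, and this set is $\mu_{d-1}$-null (for each direction $u$ at most one distance $\tau$ works), so Fatou's lemma finishes. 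You argue quantitatively in the hyperboloid model: the two hitting conditions pin $\langle u_{xy},\nu\rangle$ into an interval of length $2\sinh R/\sinh\rho$, giving an explicit decay rate for $m$. Your computation is sound — one should just add the one-line check that slabs $\{\langle u_{xy},\nu\rangle\in I\}$ of the finite-measure set $\{|\langle x,\nu\rangle|<\sinh R\}$ have $\mu_{d-1}$-measure $O(|I|)$; in the parametrization \eqref{eq:def_intensity_measure} this amounts to bounding the density of $\cosh\tau\cos\theta$, which for $d=2$ has integrable singularities at $\cos\theta=\pm1$, but the interval's center $\coth\rho\,\sinh\tau$ stays away from $\pm\cosh\tau$ uniformly for $\tau\le R$, so the bound holds. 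Your route buys an explicit (exponential) rate of decorrelation, at the cost of a model-dependent computation; the paper's route is softer and avoids any rate, which is all that is needed for ergodicity.
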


Note that mixing immediately implies \emph{ergodicity}:

\begin{corollary}
\label{cor:ergodic}
    Let \(\xi\) be as in Theorem \ref{thm:mixing}.
    Then \(\xi\) is ergodic in the sense that
    \begin{equation*}
        \P(\xi \in A) \in \{0,1\}
    \end{equation*}
    holds for any event \(A \in \mathcal{N}_s(\mathcal{F}'(\H^d))\) that is invariant under orientation-preserving isometries.
\end{corollary}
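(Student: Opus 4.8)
The plan is to derive ergodicity as a direct consequence of the mixing property established in Theorem~\ref{thm:mixing}, exactly as the phrase ``mixing immediately implies ergodicity'' suggests. The whole argument is a one-line application of \eqref{eq:mixing} with a well-chosen pair of events, so the work lies entirely in correctly bookkeeping the invariance and in checking that \eqref{eq:mixing} may legitimately be applied with the two events coinciding.

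Concretely, let $A \in \mathcal{N}_s(\mathcal{F}'(\H^d))$ be invariant under orientation-preserving isometries, meaning $\varphi(A) = A$ for every $\varphi \in \Isom^+(\H^d)$. The first step is to invoke the mixing property \eqref{eq:mixing} with the choice $B = A$; this is permitted since $A$ is itself an element of $\mathcal{N}_s(\mathcal{F}'(\H^d))$. The second step is to use invariance to simplify the integrand: since $\varphi(A) = A$ for all $\varphi$, we have $A \cap \varphi(A) = A$, and hence $\P(\xi \in A \cap \varphi(A)) = \P(\xi \in A)$ for every $\varphi \in \Isom^+(\H^d)$. In particular the quantity inside the limit in \eqref{eq:mixing} is constant in $\varphi$, so the limit as $\varphi \to \infty$ trivially exists and equals $\P(\xi \in A)$. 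The third step is to read off the conclusion: \eqref{eq:mixing} then forces
\[
\P(\xi \in A) = \P(\xi \in A)\,\P(\xi \in A),
\]
so that $p := \P(\xi \in A)$ satisfies $p = p^2$, whence $p \in \{0,1\}$.

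I do not expect a genuine obstacle here, since the deduction is routine once Theorem~\ref{thm:mixing} is granted. The only point demanding a little care is to make sure the action of $\varphi$ on events is the same one appearing in \eqref{eq:mixing}, so that invariance of $A$ really does collapse $A \cap \varphi(A)$ to $A$; this is a matter of matching conventions for how $\Isom^+(\H^d)$ acts on $\mathcal{N}_s(\mathcal{F}'(\H^d))$ rather than a substantive difficulty. (All the genuine content — the verification of \eqref{eq:mixing} itself — is located in the proof of Theorem~\ref{thm:mixing} and is not needed again here.)
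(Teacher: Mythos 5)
Your proposal is correct and coincides with the paper's (implicit) argument: the paper simply notes that mixing immediately implies ergodicity, which is exactly the standard specialization of \eqref{eq:mixing} to $B=A$ with $\varphi(A)=A$, giving $p=p^2$ and hence $p\in\{0,1\}$. No further comment is needed.
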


\begin{proof}[Proof of Theorem \ref{thm:mixing}]
    We introduce the shorthand notation
    \[ [B] \coloneqq \{H \in A(d,d-1) \colon H \cap B \neq \emptyset\}, \quad B \subseteq \H^d, \]
    for the set of hyperplanes intersecting a given subset \(B\) of hyperbolic space.
    
    A huge part of the proof given in \cite[Theorem 7.1.4]{HS} for the corresponding fact in Euclidean space carries over directly.
    In particular, one easily verifies that it is enough to show that
    \begin{equation}\label{eq:mixing_proof}
        \P(\mathcal{D}) \to 1 \quad \text{as} \quad \varphi \to \infty,
    \end{equation}
    where the event \(\mathcal D\) is defined as
    \[ \mathcal{D} = \mathcal{D}(\varphi) = \{\xi([B(x,r)] \cap [B(\varphi(x),r)]) = 0\} \]
    with arbitrary but fixed \(x \in \H^d\) and \(r \geq 0\).

    Let \(\varphi_n\) be a divergent sequence in \(\Isom^+(\H^d)\).
    Since the probability of \(\mathcal{D}(\varphi)\) depends only on the distance \(d(x,\varphi(x))\), we can assume without loss of generality, that the points \(\varphi_n(x)\) all lie on the same geodesic ray emanating from \(x\).
    Let \(y^*\) denote the ideal boundary point of this ray.
    For a hyperplane $H$ we write $y^* \in H$ if $y^*$ is an ideal limit point of $H$.
    It holds that
    \begin{align*}
        \limsup_{n\to \infty}[B(x,r)] \cap [B(\varphi_n(x),r)]
            &\subseteq [B(x,r),y^*]\\
            &\coloneqq\{H \in A(d,d-1) \colon H \cap B(x,r) \neq \emptyset, y^* \in H\}.
    \end{align*}
    Indeed, suppose that \(H\) is a hyperplane that does not contain \(y^*\).
    Then there exists a hyperplane \(H'\) separating \(H\) and \(y^*\) (cf.~Figure \ref{fig:sketch_ergodic_proof}).
    Since all but finitely many of the balls \(B(\varphi_n(x),r)\) lie on the side of \(H'\) that contains \(y^*\), they are not intersected by \(H\).
    This give the inclusion (actually, one can show that the inclusion is an equality, but this is not needed for our purposes).
    
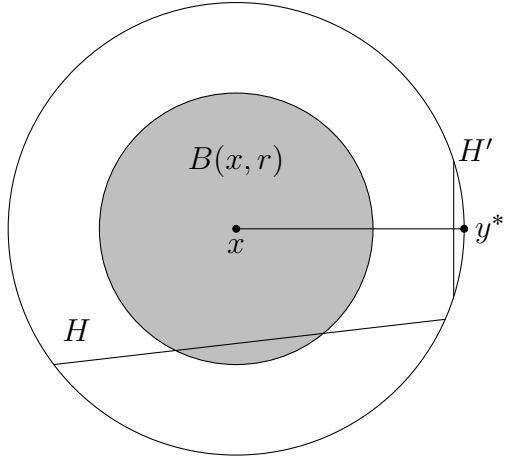
\begin{figure}[ht]
\centering
\begin{tikzpicture}[scale=3.0]
    \filldraw[color=black, fill=white] (0,0) circle (1);
          \filldraw[color=black, fill=lightgray] (0,0) circle (0.6);
          \draw [black] (0.95393,0.3) -- (0.95393,-0.3);
          \draw [black] (-0.8,-.6) -- (0.91651,-0.4);
          \filldraw[fill=black] (0,0) node[below, scale=1] {$x$} circle (0.015);
          \draw (0,0) -- (1.0,0.0);
          \filldraw[fill=black] (1.0,0.0) node[right, scale=1] {$y^*$} circle (0.015);
          \node (A) at (-0.7,-0.45) [scale=1] {$H$};
          \node (A) at (1.05,0.35) [scale=1] {$H'$};
          \node (A) at (0.0,0.3) [scale=1] {$B(x,r)$};
\end{tikzpicture}
\caption{If \(y^* \notin H\), then there exists a hyperplane \(H'\) separating \(y^*\) and \(H\) (illustrated in the Klein model).
\label{fig:sketch_ergodic_proof}}
\end{figure}
    
    From basic properties of the Poisson process it follows that
    \begin{align*}
        &\P(\mathcal{D}(\varphi_n))\\
            &\quad= \exp\left(-\gamma \cdot \mu_{d-1}([B(x,r)] \cap [B(\varphi_n(x),r)])\right)\\
            &\quad= \exp\left(-2\gamma \cdot \int_{S_x^{d-1}} \int_0^r \cosh^{d-1}(\tau)
            \one\{H(u,\tau) \in [B(x,r)] \cap [B(\varphi_n(x),r)]\} \dint\tau \sigma_{d-1}(\dint u) \right),
    \end{align*}
    where we used that \(H(u,\tau) \in [B(x,r)]\) implies \(\tau \leq r\).
    Applying Fatou's lemma now yields
    \begin{multline*}
        \limsup_{n\to\infty}\int_0^r \cosh^{d-1}(\tau)
            \one\{H(u,\tau) \in [B(x,r)] \cap [B(\varphi_n(x),r)]\} \dint\tau \\ \leq
            \int_0^r \cosh^{d-1}(\tau)
            \one\{H(u,\tau) \in [B(x,r),y^*]\} \dint\tau.
    \end{multline*}
    For fixed \(u \in S_x^{d-1}\) and distinct \(\tau,\tau'\geq 0\), the hyperplanes \(H(u,\tau)\) and \(H(u,\tau')\) have no common ideal boundary points.
    Hence, for a given \(u\in S_x^{d-1}\), there exists at most one \(\tau \geq 0\) so that \(H(u,\tau)\) has \(y^*\) as an ideal boundary point.
    It follows that the integral on the right-hand side vanishes, which in turn implies \eqref{eq:mixing_proof}.
\end{proof}

\end{document}